\theoremstyle{plain}
  \newtheorem{theorem}{Theorem}[section]
  \newtheorem{lemma}[theorem]{Lemma}
  \newtheorem{definition}[theorem]{Definition}
  \newtheorem{corollary}[theorem]{Corollary}
  \newtheorem{problem}[theorem]{Problem}
\theoremstyle{remark}
  \newtheorem{example}[theorem]{Example}
  \newtheorem*{ack}{Acknowledgments}
\newcommand{\pref}[1]{(\ref{#1})}  
\newcommand{\PPP}{\mathsf{P}}
\newcommand{\ZZZ}{\mathbb{Z}}
\newcommand{\SA}{\mathit{SA}}  
\newcommand{\LIP}{\textsc{LIP}}
\newcommand{\RIP}{\textsc{RIP}}
\newcommand{\AAIP}{\textsc{AAIP}}
\newcommand{\FLEX}{\textsc{Flex}}
\newcommand{\RALT}{\textsc{RAlt}}
\newcommand{\LALT}{\textsc{LAlt}}
\newcommand\inv{^{-1}} 
\newcommand\ldv{\backslash} 
\newcommand\rdv{/}  
\title[Loops with Universal and Semi-Universal Flexibility]
{Loops with Universal and Semi-Universal Flexibility}
\author[Britten]{Riley~Britten}
\address{Department of Mathematics \\
University of Denver \\
Denver, CO 80208 USA}
\email{nrb1324@hotmail.com}
\author[Kinyon]{Michael~Kinyon}
\address{Department of Mathematics \\
University of Denver \\
Denver, CO 80208 USA}
\email{michael.kinyon@du.edu}
\author[Kunen]{Kenneth~Kunen}
\address{Department of Mathematics \\
University of Wisconsin \\
Madison, WI 57306 USA}
\author[Phillips]{J.~D.~Phillips}
\address{Department of Mathematics \& Computer Science \\
Northern Michigan University \\
Marquette, Michigan 49855 USA}
\email{jophilli@nmu.edu}
\date{\today}
\subjclass[2000]{20N05}
\keywords{loop, Moufang loop, flexibility, inverse property, isotopy}
\begin{document}

\begin{abstract}
We study loops which are universal (that is, isotopically invariant) with
respect to the property of flexibility ($xy\cdot x = x\cdot yx$). We also
weaken this to semi-universality, that is, loops in which every left and
right isotope is flexible, but not necessarily every isotope. One of our main
results is that universally flexible, inverse property loops are Moufang loops.
On the other hand, semi-universally flexible, inverse property loops are diassociative.
We also examine the relationship between universally flexible loops and middle Bol
loops. The paper concludes with some open problems.
\end{abstract}

\maketitle

\begin{center}
\textsc{In memory of Ken Kunen (1943--2020)}
\end{center}


\section{The story of this paper}
\label{sec:story}

This paper was started in 2003 by MK, KK and JDP. Most of the results herein, especially Theorems \ref{thm:main1} and \ref{thm:main2},
were proved in that year; only a few were proved more recently. However, we were unable to settle what we have now stated as
Problem \ref{prb:prob1}.
This was an unsatisfactory state of affairs because if the answer turned out to be negative (which we
suspect is not the case), it would imply that in Theorem \ref{thm:main2}, we simply had a more complicated proof of the diassociativity
of Moufang loops than already existing ones. Thus we set the paper aside in the hope that we could resolve the issue.

Years passed. We occasionally would dig up our notes, think about the problem, get nowhere, and set it aside once again. And of course,
we all got busy with other projects and other priorities.

In 2022, MK suggested to his PhD student RB that searching for an example to resolve Problem \ref{prb:prob1} would make
a nice topic to fit into their dissertation. As discussed in {\S}7, that particular search was unsuccessful, but the
methodology led to other useful examples, especially those appearing in {\S}4 and {\S}5.

Ken Kunen passed away in 2020. It is difficult to overstate the influence that he had on our work. It was thanks to him
that both MK and JDP have focused their research careers on the use of automated deduction tools in quasigroup theory and
other parts of algebra. After mulling it over for a long time, we decided that the best tribute we could give to him
was to finally submit the paper for publication, leaving Problem \ref{prb:prob1} open with the hope that someone will be able
to resolve it. We also note that Ken deserves full coauthor status because all of {\S}6, the proof of Theorem \ref{thm:main2},
was entirely his work.

\section{Introduction}
\label{sec:intro}

A \emph{loop} $(Q,\cdot,1)$ is a set $Q$ with a binary operation $\cdot$ such that there is an identity element $1$ and
for each $a\in Q$, the mappings $L(a):Q\to Q; x\mapsto a\cdot x$ and $R(a):Q\to Q; x\mapsto x\cdot a$ are bijections of $Q$.
An equivalent and also useful definition is: A loop $(Q,\cdot,\ldv,\rdv,1)$ is a set $Q$ with three binary operations
$\cdot, \ldv, \rdv : Q\times Q\to Q$ and an element $1\in Q$ satisfying
the equations:
\begin{align*}
x\ldv (x\cdot y) = y = (y\cdot x) \rdv x \\
x\cdot (x\ldv y) = y = (y\rdv x)\cdot x \\
1\cdot x = x = x\cdot 1\,.
\end{align*}
Basic references for loop theory are \cite{Bel}, \cite{Br}, \cite{Pf}.

For a loop $Q$ and fixed elements $a,b\in Q$, the binary operations defined by
\[
x \cdot_{a,b} y := (x\rdv a)(b\ldv y)  \qquad
x \ldv_{a,b}\ y := b\ [ (x\rdv a)\ldv y ] \qquad
x \rdv_{a,b}\ y := [ x\rdv (b\ldv y) ] a
\]
give $Q$ a new loop structure $(Q, \cdot_{a,b},\ldv_{a,b},\rdv_{a,b}, ba)$ called a (principal) \emph{loop isotope}
of $Q$. \emph{Right} loop isotopes are those with $a=1$ and \emph{left} loop isotopes are those with $b=1$.
Much of loop theory is devoted to the study of \emph{universal} or \emph{isotopically invariant}
properties:

\begin{definition}
If $\PPP$ is a property of loops, then a loop $Q$ is:
\begin{itemize}
\item \emph{left semi-universally} $\PPP$ iff all left loop isotopes satisfy $\PPP$;
\item \emph{right semi-universally} $\PPP$ iff all right loop isotopes satisfy $\PPP$;
\item \emph{semi-universally} $\PPP$ if it is both left and right semi-universally $\PPP$;
\item \emph{universally} $\PPP$ iff all loop isotopes satisfy $\PPP$.
\end{itemize}
\end{definition}

``Universally $\PPP$'' may or may not be strictly stronger than ``semi-universally $\PPP$'', which may or may not be
strictly stronger than ``$\PPP$''. For example, all Moufang loops are universally Moufang. Moufang loops are alternative
and satisfy the inverse property (IP). Every semi-universally IP loop is Moufang, and every semi-universally alternative
loop is Moufang. Thus, ``universally IP'' $=$ ``semi-universally IP'' is strictly stronger than ``IP'', and
``universally alternative'' $=$ ``semi-universally alternative'' is strictly stronger than ``alternative''.

We interject here the definitions of the above terms, plus a few others used in the sequel:

A loop is \emph{Moufang} iff it satisfies the following four equivalent identities:
\[
\begin{array}{rccrc}
M1: & (x\cdot yz)x = xy\cdot zx & \qquad & M2: & x(yz\cdot x) = xy\cdot zx \\
N1: & (xy\cdot z)y = x(y\cdot zy) & \qquad & N2: & x(y\cdot xz) = (xy\cdot x)z
\end{array}
\]

For an element $x$ of a loop, if $1\rdv x = x\ldv 1$, their common value is denoted by $x\inv$.
In a loop in which $1\rdv x = x\ldv 1$ holds for all $x$,
the \emph{left inverse property} (\LIP), the \emph{right inverse property} (\RIP), and the
\emph{antiautomorphic inverse property} (\AAIP) are defined by the identities
\begin{alignat*}{3}
 x\ldv y &= x\inv y &&\text{ or } L(x)\inv = L(x\inv) \tag{\LIP} \\
 x\rdv y &= xy\inv &&\text{ or } R(y)\inv = R(y\inv) \tag{\RIP} \\
 (xy)\inv &= y\inv x\inv\,. && \tag{\AAIP}
\end{alignat*}
Any two of these identities implies the third, and a loop satisfying all three is said to have
the \emph{inverse property} (IP). (In fact, all three properties can be defined in such a way
that the property $1\rdv x = x\ldv 1$ is a theorem, but the equivalent definition above is more
convenient for our purposes.)

Note that every AAIP loop, and in particular every IP loop, is isomorphic to its opposite loop,
so whenever we know that an identity holds in such a loop, we automatically
have the mirror of that identity.

A loop is \emph{diassociative} iff any two of its elements generate a group, and
\emph{power-associative} iff any one of its elements generate a group.
Three well-known weakenings of diassociativity are the
\emph{left alternative property} (\LALT), the
\emph{right alternative property} (\RALT), and the
\emph{flexible property} (\FLEX) defined by
\begin{alignat*}{3}
x\cdot xy &= x^2 y &&\text{ or } L(x^2) = L(x)^2 \tag{\LALT} \\
xy\cdot y &= xy^2  &&\text{ or }  R(y^2) = R(y)^2 \tag{\RALT} \\
x\cdot yx &= xy\cdot x && \text{ or } [R(x), L(x)] = I \tag{\FLEX}
\end{alignat*}
The term ``flexible'' has been translated into Russian as
``\foreignlanguage{russian}{эластичны}''
and translated back into English as ``elastic''. In a flexible loop,
we have $1\rdv x = x\ldv 1$ for all $x$. A loop is \emph{alternative} iff it satisfies both \LALT\, and \RALT.
As usual in loop theory, we often write such equations as statements about the left and right multiplications,
since proofs are often simpler when expressed in terms of some permutation group of the loop.

A \emph{left Bol loop} is one satisfying the identity $x(y\cdot xz) = (x\cdot yx)z$ or equivalently,
$L(x\cdot yx) = L(x)L(y)L(x)$. All left Bol loops are power associative and satisfy the \LIP\, and \LALT. All left Bol
loops are universally left Bol, and any loop which is either (semi-)universally \LIP\, or (semi-)universally \LALT\, is left Bol.
The opposite loop of a left Bol loop is a \emph{right Bol loop} defined by the identity
$(xy\cdot z)y = x(yz\cdot y)$ or $R(yz\cdot y) = R(y)R(z)R(y)$; so loops which are
(semi-)universally RIP or (semi-)universally RAP are right Bol.

A loop which is universally AAIP is called a \emph{middle Bol loop}; these are characterized by the identity
$x((yz)\ldv x) = (x\rdv z)(y\ldv x)$ or equivalently, the mirror of this identity. The operation $x\circ y := x\rdv y\inv$ in a
left Bol loop defines a middle Bol loop. A loop is Moufang iff it satisfies any two, and hence all three, of the
Bol properties.

Another interesting variety properly containing the Moufang loops consists of loops which are universally flexible (UF);
these have been studied in \cite{RR, Sh1, Sh2, Sh3, Sh4, Syr1, Syr2}.
To express this property as an equation, we expand
$x \cdot_{u,v} (y\cdot_{u,v} x) = (x \cdot_{u,v} y)\cdot_{u,v} x$
in terms of the left and right division operations to get the following three equivalent equations:
\begin{eqnarray}
\label{eq:uflex}
(x\rdv u)[v\ldv ((y\rdv u)(v\ldv x))] &=& [((x\rdv u)(v\ldv y))\rdv u] (v\ldv x) \\
\label{eq:uflex-t}
R(u)\inv R(v\ldv x) L(v)\inv L(x\rdv u)& =& L(v)\inv L(x\rdv u) R(u)\inv R(v\ldv x) \\
\label{eq:uflex-com}
[R(u)\inv R(v\ldv x),\;  L(v)\inv L(x\rdv u)] &=& I \ \ .
\end{eqnarray}
Such loops need not have the IP, but in \cite{Syr1, Syr2}, Syrbu studied UF IP loops, showing that they share
many properties with Moufang loops. This turns out not to be surprising
in view of our first result:

\begin{theorem}
\label{thm:main1}
Every universally flexible, IP loop is Moufang.
\end{theorem}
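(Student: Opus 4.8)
The plan is to rewrite universal flexibility as an operator identity, simplify it using the inverse property, and then extract a Moufang identity. Since $Q$ has the IP, I may replace $R(u)\inv$ by $R(u\inv)$, $L(v)\inv$ by $L(v\inv)$, $v\ldv x$ by $v\inv x$, and $x\rdv u$ by $xu\inv$ throughout \eqref{eq:uflex-com}. Writing $p=u\inv$ and $q=v\inv$, universal flexibility becomes the assertion that, for all $p,q,x$,
\[
[\,R(p)R(qx),\ L(q)L(xp)\,]=I.
\]
My target will be the left Bol identity $L(x)L(y)L(x)=L(x\cdot yx)$. This is a good target because $Q$ is flexible (the case $p=q=1$, i.e.\ $u=v=1$, reduces the displayed relation to $[R(x),L(x)]=I$), and in a flexible loop $x\cdot yx=xy\cdot x$, so the left Bol identity coincides with the Moufang identity $N2$; since $N2$ is one of the four equivalent defining forms of the Moufang law, establishing it finishes the theorem.

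First I would record the immediate specializations. Setting $p=1$ gives $[R(qx),L(q)L(x)]=I$, equivalently the element identity $(q\cdot xz)(qx)=q(x(z\cdot qx))$, while setting $q=1$ gives the mirror relation $[R(p)R(x),L(xp)]=I$. Moreover, since an IP loop is isomorphic to its opposite, the mirror of every identity we derive is automatically available; concretely, the anti-automorphic inverse map $J\colon t\mapsto t\inv$ conjugates $L(t)$ to $R(t\inv)$ and $R(t)$ to $L(t\inv)$. It is crucial to notice, however, that the $p=1$ and $q=1$ specializations together encode exactly the flexibility of all left and right isotopes, i.e.\ \emph{semi}-universal flexibility; by Theorem \ref{thm:main2} these alone are only known to force diassociativity, and whether they already force Moufang is precisely the open Problem \ref{prb:prob1}. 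Hence a genuine proof of the present theorem cannot rest on single-variable reductions: it must use an instance of the displayed identity in which $p$, $q$, and $x$ are simultaneously nontrivial and unrelated.

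The hard part will be the passage from this balanced commutator identity to the left Bol identity. The obstruction is structural: the relation always pairs a block of two right translations $R(p)R(qx)$ with a block of two left translations $L(q)L(xp)$, so no single substitution can isolate the three left translations of the Bol identity, and the two right translations cannot be collapsed into one without invoking a Moufang-type law (which is what we are trying to prove)—indeed, every attempt to trivialize one block via IP cancellation collapses the whole identity to something semi-universal. The derivation must therefore combine several instances of $[R(p)R(qx),L(q)L(xp)]=I$ at cleverly chosen, jointly nontrivial arguments, together with its $J$-mirror, flexibility, and the IP cancellation rules $p\inv(pq)=q$, $(pq)q\inv=p$, and $(pq)\inv=q\inv p\inv$, in order to eliminate the spurious right translations. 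This is exactly the kind of equational bookkeeping for which the authors' automated-deduction tools are well suited. Once the left Bol identity $L(x)L(y)L(x)=L(x\cdot yx)$ is in hand, flexibility rewrites it as $N2$, and since $N2$ is a defining Moufang identity, $Q$ is Moufang.
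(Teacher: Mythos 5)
Your setup is sound and matches the paper's: you rewrite universal flexibility via IP as the operator identity $R(u)R(vx)L(v)L(xu)=L(v)L(xu)R(u)R(vx)$ (the paper's \eqref{eq:IPuflex-t}), you correctly observe that the $u=1$ and $v=1$ specializations only recover semi-universal flexibility (so, by the openness of Problem \ref{prb:prob1}, cannot by themselves yield Moufang), and your target reduction is valid: left Bol plus flexibility gives $N2$, hence Moufang. But the proposal has a genuine gap, and it is exactly at the point you yourself flag as ``the hard part'': you never actually derive any Bol or Moufang identity from the commutator identity. Saying that the derivation ``must combine several instances at cleverly chosen, jointly nontrivial arguments'' and that this is ``the kind of equational bookkeeping for which automated-deduction tools are well suited'' is a description of a proof strategy, not a proof. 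The entire mathematical content of the theorem lives in that missing computation, so as it stands the proposal establishes nothing beyond the (correct) framing.

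For comparison, the paper fills this gap concretely, and with two ingredients you do not supply. First, it proves that every SUF IP loop is alternative (Lemma \ref{lem:sufip-alt}), which is used silently throughout the main computation. Second, working from the re-parametrized form \eqref{eq:IPuflex-t2}, $R(xu)R(v)L(vx)L(u)=L(vx)L(u)R(xu)R(v)$, it sets $v=x\inv$ to get $R(x\inv)L(u)R(x)=R(u\inv x\inv)L(u)R(xu)$, multiplies on the right by $R(v)$, applies \eqref{eq:IPuflex-t2} a second time (so two jointly nontrivial instances are indeed combined, as you predicted would be necessary), and then evaluates both sides at the element $(vx)\inv$; after IP and alternative cancellations this collapses to $uR(xvx)=uR(x)R(v)R(x)$, which is the Moufang identity $N1$ rather than your proposed target $N2$/left Bol. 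So your instinct about the shape of the argument was right, but a referee would have to reject the proposal: the step that does all the work is asserted to exist rather than carried out.
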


The equations defining the variety of semi-universally flexible (SUF) loops are:
\begin{eqnarray}
x (v\ldv [y (v\ldv x)]) &=& [x (v\ldv y)] (v\ldv x)
\label{eq:hflex1} \\
R(v\ldv x) L(v)\inv L(x) &=& L(v)\inv L(x) R(v\ldv x)
\label{eq:hflex-t1} \\
&\text{and}&
\nonumber \\
(x\rdv u)[(y\rdv u) x] &=& ([(x\rdv u) y]\rdv u) x
\label{eq:hflex2} \\
R(u)\inv R(x) L(x\rdv u) &=& L(x\rdv u) R(u)\inv R(x)
\label{eq:hflex-t2}
\end{eqnarray}

Moufang's Theorem implies that Moufang loops are diassociative. This has been generalized \cite{KKP} to a
wider class of loops called \emph{ARIF} loops, which are flexible loops satisfying the (equivalent) identities
\[
R(xyx)R(y) = R(x)R(yxy) \qquad L(xyx)L(y) = L(x)L(yxy)
\]
In this paper we offer a (possibly) different generalization:

\begin{theorem}
\label{thm:main2}
Every SUF IP loop is diassociative.
\end{theorem}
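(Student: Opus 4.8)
The plan is to strip the hypotheses down to a single clean operator identity and then recognize the resulting variety as one already known to be diassociative. Since a loop is a (trivial) left and right isotope of itself, every SUF loop is flexible, so throughout I may use $\FLEX$, i.e. $[R(x),L(x)] = I$, together with the full IP. First I would rewrite the two operator equations \pref{eq:hflex-t1} and \pref{eq:hflex-t2} using IP. Applying $\LIP$ (so that $v\ldv x = v\inv x$ and $L(v)\inv = L(v\inv)$) to \pref{eq:hflex-t1} and substituting $v = s\inv$ turns it into $R(sx)L(s)L(x) = L(s)L(x)R(sx)$; renaming, this is
\[
[R(xy),\,L(x)L(y)] = I \qquad (\ast).
\]
Dually, applying $\RIP$ to \pref{eq:hflex-t2} and substituting $u = t\inv$ gives $[L(xy),\,R(y)R(x)] = I$. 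The inversion map $J\colon x\mapsto x\inv$ satisfies $JL(x)J = R(x\inv)$ and $JR(x)J = L(x\inv)$ by $\AAIP$, so conjugating $(\ast)$ by $J$ produces exactly this second identity. Since $J$ is the isomorphism of the loop onto its opposite, the two SUF conditions are mirror images of each other in the presence of IP, and the whole hypothesis is captured by the single identity $(\ast)$, its mirror being automatic.

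Next I would extract concrete consequences of $(\ast)$ by evaluating both sides on chosen elements. Evaluating at $1$ gives $(xy)^2 = x(y\cdot xy)$, and evaluating at a general $z$ gives the four-variable identity $x(yz)\cdot xy = x\,(y\,(z\cdot xy))$. These, together with $\FLEX$ and the IP relations $R(x\inv) = R(x)\inv$, $L(x\inv) = L(x)\inv$, $(xy)\inv = y\inv x\inv$, form the raw material from which one-sided identities can be assembled. The target is to show that the loop is \emph{ARIF}, i.e. that $R(xyx)R(y) = R(x)R(yxy)$ holds; the mirror identity $L(xyx)L(y) = L(x)L(yxy)$ then comes for free from IP. Once ARIF is established, Theorem \ref{thm:main2} follows at once from the result of \cite{KKP} that every ARIF loop is diassociative. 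I would reach the ARIF identity by specializing $(\ast)$ and its mirror — for instance setting one variable equal to another to produce relations among $R(x^2)$, $R(x)^2$, and $L(x)^2$ — and then using flexibility to collapse the mixed left--right products into a pure right-translation identity.

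The hard part will be exactly this last passage: the reduced hypothesis $(\ast)$ intertwines left and right translations, whereas the ARIF identity involves right translations only, so one must eliminate an entire family of translations using IP and $\FLEX$. I expect this to be a long but essentially mechanical chain of substitutions rather than a single decisive move, which is why a direct machine-assisted equational derivation — feeding $(\ast)$, $\FLEX$, and the IP laws to an automated prover and asking for $R(xyx)R(y) = R(x)R(yxy)$, or for diassociativity outright — is the most realistic route to a complete proof, consistent with \S6 being carried out by automated deduction. A secondary point worth stressing is that this argument delivers diassociativity \emph{without} deciding whether the loop is Moufang, so it leaves Problem \ref{prb:prob1} untouched and matches the paper's description of Theorem \ref{thm:main2} as a ``possibly different'' generalization of the diassociativity of Moufang loops.
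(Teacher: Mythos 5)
Your reduction of the SUF IP hypotheses to the single commutator identity $(\ast)$, $[R(xy),\,L(x)L(y)] = I$, together with its $J$-conjugate mirror, is correct, and it is in fact exactly where the paper starts: these are equations \pref{eq:IPhflex-t1} and \pref{eq:IPhflex-t2}, packaged in Lemma \ref{lem:equiv1}. But everything after that is a gap, and it is the entire content of the theorem. Your plan is to derive the ARIF identity $R(xyx)R(y)=R(x)R(yxy)$ from $(\ast)$ and then cite \cite{KKP}; you give no derivation, only the hope that an automated prover will find one. That is not a proof, and worse, the containment ``SUF IP implies ARIF'' is not known to hold: the paper pointedly calls Theorem \ref{thm:main2} a ``(possibly) different'' generalization of the ARIF result, which is precisely an admission that the authors --- who used \textsc{Prover9} extensively on this problem over many years --- could not establish that containment. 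Had it been available, {\S}6 would be a one-line corollary of \cite{KKP} rather than a long argument. (If every SUF IP loop were Moufang, ARIF would of course follow, but that is exactly the open Problem \ref{prb:prob1}, which you cannot assume.)

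There is also a structural obstruction to your fallback of ``asking a prover for diassociativity outright'': diassociativity is not a single identity, nor a finite set of identities; it is the infinite family of statements $A(x_1,\ldots,x_n)$ over all words in two generators, so no purely equational derivation from $(\ast)$ can finish the job --- some induction over word length is mandatory. This inductive engine is exactly what the paper supplies and your proposal lacks. The paper isolates the quasi-identity $\SA(x,y,z)\to\SA(x,y,yz)$ (the \emph{stepping-up} property), proves by induction on $n$ that stepping-up IP loops satisfy $H(n)$ for every $n$ and hence are diassociative, and then shows, in a finite hand-checkable verification of twelve associativity equations, that every SUF IP loop is stepping-up. The stepping-up implication plays the role you wanted ARIF to play: it is the finitely verifiable surrogate that makes the induction run, and finding it --- not the reduction to $(\ast)$ --- is the real work. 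Incidentally, your closing claim that automation would be ``consistent with {\S}6'' misreads the paper: {\S}6 is described as entirely Kunen's own argument, a human-readable induction, not machine output.
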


In fact, this theorem will turn out to be a corollary of a theorem asserting that every loop in an even wider class,
which we call \emph{stepping up}, is diassociative.

\section{Semi-universal flexibility}
\label{sec:leftSUF}

The variety of left SUF loops is defined by the identity \eqref{eq:hflex2}.
Replacing $x$ with $xu$ and $y$ with $yu$ and then rearranging, we obtain
\begin{equation}\label{eq:leftSUF3}
(x\cdot yu)\rdv u = (x (y\cdot xu))\rdv (xu)\,.
\end{equation}
This equation implies the following.

\begin{lemma}\label{lem:assoc}
For $a,b,c$ in a left SUF loop $Q$, the following are equivalent:
(i) $a\cdot bc = ab\cdot c$; (ii) $ab\cdot ac = a(b\cdot ac)$; (iii) $ab\cdot (a\ldv c) = a(b(a\ldv c))$.
\end{lemma}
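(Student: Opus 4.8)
The plan is to push everything through the rewritten identity \eqref{eq:leftSUF3} specialized to $x=a$, $y=b$, $u=c$, which reads
\[
(a\cdot bc)\rdv c = (a(b\cdot ac))\rdv(ac)\,.
\]
The key observation I would make is that each of conditions (i) and (ii) can be rephrased as the assertion that one side of this identity equals $ab$. Since $R(c)$ is a bijection, condition (i), namely $a\cdot bc = ab\cdot c$, is equivalent after right-dividing by $c$ to $(a\cdot bc)\rdv c = ab$; similarly, since $R(ac)$ is a bijection, condition (ii) is equivalent to $(a(b\cdot ac))\rdv(ac) = ab$.

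With this reformulation in hand, (i) $\Leftrightarrow$ (ii) is immediate: the two quantities $(a\cdot bc)\rdv c$ and $(a(b\cdot ac))\rdv(ac)$ are literally equal by the specialization of \eqref{eq:leftSUF3} above, so one of them equals $ab$ if and only if the other does. No further computation is required. To reach (iii), I would then apply the equivalence (i) $\Leftrightarrow$ (ii) with the third argument $c$ replaced by $d := a\ldv c$. The two statements it produces are $a\cdot bd = ab\cdot d$ and $ab\cdot ad = a(b\cdot ad)$; since $ad = a(a\ldv c) = c$, the first of these is exactly (iii) and the second is exactly (i). Hence (iii) $\Leftrightarrow$ (i), which closes the cycle of equivalences.

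I do not expect a genuine obstacle here, since the only substantive input is the single evaluation of \eqref{eq:leftSUF3} at $(a,b,c)$ and the rest is cancellation using that $R(c)$ and $R(ac)$ are bijections in a loop. The one step requiring insight is recognizing that the ``hidden'' bridge between the two sides of the lemma is precisely \eqref{eq:leftSUF3} read at $(a,b,c)$, together with the reformulation of each associativity condition as a right-division identity equal to $ab$; once that is seen, the equivalence with (iii) follows purely formally from the change of variable $c \mapsto a\ldv c$, with no appeal to any further structure of the loop.
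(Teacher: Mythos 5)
Your proposal is correct and matches the paper's intended argument: the paper offers no written proof beyond the remark that \eqref{eq:leftSUF3} ``implies'' the lemma, and your writeup—reading \eqref{eq:leftSUF3} at $(a,b,c)$, observing that (i) and (ii) each say that one side of that identity equals $ab$, and then substituting $c\mapsto a\ldv c$ to bring in (iii)—is precisely the argument being left to the reader. No gaps; the substitution step correctly uses $a(a\ldv c)=c$ so that the instance of (ii) at $a\ldv c$ collapses to (i).
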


We will soon show that left SUF loops are power-associative. Until then, we use left-association
to define powers of elements: $x^n := 1 L(x)^n$ for all $n\in \ZZZ$.

\begin{lemma}\label{lem:xyx}
For all $x,y$ in a left SUF loop $Q$ and for all $n\in \ZZZ$, $x\cdot yx^n = xy\cdot x^n$.
\end{lemma}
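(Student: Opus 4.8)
The plan is to prove the statement $P(n):\ x\cdot yx^n = xy\cdot x^n$ by an induction on $n$ whose inductive step is supplied, essentially for free, by Lemma~\ref{lem:assoc}. The observation that makes this work is that $P(n)$ is literally condition (i) of that lemma for a suitable choice of arguments, while the very next instance $P(n+1)$ turns out to be condition (ii) for the same arguments; since (i) and (ii) are equivalent, so are $P(n)$ and $P(n+1)$.

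To see this, fix $x,y\in Q$ and apply Lemma~\ref{lem:assoc} with $a=x$, $b=y$, and $c=x^n$. Condition (i), $a\cdot bc = ab\cdot c$, is then exactly $P(n)$. Condition (ii), $ab\cdot ac = a(b\cdot ac)$, becomes $xy\cdot(x\cdot x^n) = x\bigl(y\cdot(x\cdot x^n)\bigr)$, and here I would invoke the power recursion $x\cdot x^n = x^{n+1}$, which holds for every $n\in\ZZZ$ because $x^n := 1L(x)^n$ and $L(x)$ is a bijection. After this substitution, condition (ii) reads $xy\cdot x^{n+1} = x\cdot yx^{n+1}$, that is, $P(n+1)$. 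Thus Lemma~\ref{lem:assoc} yields the equivalence $P(n)\Leftrightarrow P(n+1)$ simultaneously for all integers $n$.

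It remains only to anchor the induction, and the cleanest base case is $P(0)$, where both sides reduce to $xy$ since $x^0 = 1$. Because the equivalence $P(n)\Leftrightarrow P(n+1)$ propagates in both directions, $P(0)$ forces $P(n)$ for every $n\in\ZZZ$, negative exponents included. I do not expect a genuine obstacle beyond bookkeeping: the only points demanding care are the correct matching of $P(n)$ and $P(n+1)$ with conditions (i) and (ii), and the verification that the recursion $x\cdot x^n = x^{n+1}$ remains valid for negative as well as nonnegative $n$. If one wished to separate the two directions explicitly, condition (iii) of Lemma~\ref{lem:assoc}, which simplifies to $P(n-1)$ via $x\ldv x^n = x^{n-1}$, would handle the downward step while (ii) handles the upward one.
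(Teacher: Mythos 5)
Your proposal is correct and is essentially the paper's own proof: the paper likewise applies Lemma~\ref{lem:assoc} with $a=x$, $b=y$, $c=x^n$ to obtain the equivalences $P(n)\Leftrightarrow P(n+1)\Leftrightarrow P(n-1)$, anchors at the trivial case $n=0$, and inducts in both directions. Your added care about the identities $x\cdot x^n = x^{n+1}$ and $x\ldv x^n = x^{n-1}$ (valid for all $n\in\ZZZ$ since $x^n := 1L(x)^n$ and $L(x)$ is a bijection) just makes explicit what the paper leaves implicit.
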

\begin{proof}
From Lemma~\ref{lem:assoc}, for each $n\in \ZZZ$,
$x\cdot yx^n = xy\cdot x^n$ iff $x\cdot yx^{n+1} = xy\cdot x^{n+1}$ iff $x\cdot yx^{n-1} = xy\cdot x^{n-1}$.
The case $n=0$ being trivial, an induction argument completes the proof.
\end{proof}

\begin{theorem}\label{thm:pa}
Left SUF loops are power-associative.
\end{theorem}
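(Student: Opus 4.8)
The plan is to show that for every $x\in Q$ and all integers $m,n$ one has $x^m\cdot x^n = x^{m+n}$. This single identity makes the set $\{x^n : n\in\ZZZ\}$ a cyclic group under $\cdot$ (associativity of powers, the inverse $x^{-n}$, and closure under $\ldv$ and $\rdv$ are all immediate consequences), so this set coincides with the subloop $\langle x\rangle$ and power-associativity follows. The two facts I would start from are that the left-association convention $x^n := 1L(x)^n$ gives $x\cdot x^k = x^{k+1}$ for every $k\in\ZZZ$, because $1L(x)^{k+1} = (1L(x)^k)L(x)$, and that each $L(x)$ is a bijection, hence left-cancellative.

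The engine of the argument is Lemma~\ref{lem:xyx}, specialized by taking $y=x^k$, which yields
\[
x\cdot(x^k\cdot x^n) = (x\cdot x^k)\cdot x^n = x^{k+1}\cdot x^n
\]
for all $k,n\in\ZZZ$. I would let $P(k)$ denote the assertion ``$x^k\cdot x^n = x^{k+n}$ for all $n\in\ZZZ$'' and prove the equivalence $P(k)\Leftrightarrow P(k+1)$. Indeed, if $P(k)$ holds then $x^{k+1}\cdot x^n = x\cdot(x^k\cdot x^n) = x\cdot x^{k+n} = x^{k+n+1}$, which is $P(k+1)$; conversely, if $P(k+1)$ holds then $x\cdot(x^k\cdot x^n) = x^{k+1}\cdot x^n = x^{k+n+1} = x\cdot x^{k+n}$, and cancelling the injective map $L(x)$ gives $x^k\cdot x^n = x^{k+n}$, that is, $P(k)$.

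Since $P(0)$ is trivial (as $1\cdot x^n = x^n$), running this induction upward and downward from $k=0$ delivers $P(k)$ for every integer $k$, which is exactly the desired identity $x^m\cdot x^n = x^{m+n}$. I do not expect a genuine obstacle, because Lemma~\ref{lem:xyx}---whose proof already encodes the SUF hypothesis via Lemma~\ref{lem:assoc}---supplies all the associativity that powers of a single element require. The one point that genuinely needs care is the treatment of negative exponents: instead of a one-directional induction it is cleanest to run the induction both ways, and the backward step is precisely where left-cancellativity of $L(x)$ is used to pass from $P(k+1)$ back to $P(k)$.
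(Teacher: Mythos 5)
Your proof is correct and follows essentially the same route as the paper's: both specialize Lemma~\ref{lem:xyx} at $y=x^m$ to get $x\cdot x^m x^n = x^{m+1}x^n$ and then induct on the exponent in both directions from the trivial base case $m=0$. Your explicit use of left-cancellation by $L(x)$ for the downward step just spells out what the paper compresses into ``replacing $m$ with $-m$, we can then induct again,'' and your closing observation that the identity $x^m x^n = x^{m+n}$ makes $\langle x\rangle$ a group is a correct (and slightly more careful) account of why this identity is power-associativity.
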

\begin{proof}
Let $Q$ be a left SUF loop. We will prove $x^m x^n = x^{m+n}$ for all $m,n\in \ZZZ$.
For $m = 0$, there is nothing to prove. Taking $y = x^m$ in Lemma~\ref{lem:xyx}, we have
\begin{equation}\label{eq:pa-tmp}
x\cdot x^m x^n = x^{m+1} x^n .
\end{equation}
Assume $x^m x^n = x^{m+n}$ for some $m,n$.
Then $x^{m+n+1} = x x^{m+n} = x\cdot x^m x^n = x^{m+1} x^n$ by \eqref{eq:pa-tmp}.
By induction, we have the desired result for all nonnegative $m$. Replacing $m$ with $-m$,
we can then induct again to complete the argument.
\end{proof}

Now we work in SUF loops, that is, loops that are both left and right SUF. We will use
power-associativity without explicit reference. We also have
available both Lemma~\ref{lem:assoc} and its dual, so that for $a,b,c$ in an SUF loop $Q$,
\begin{equation}\label{eqn:abc}
\begin{aligned}
ab\cdot c = a\cdot bc &\iff ab\cdot ac = a(b\cdot ac) &&\iff (ac\cdot b)c = ac\cdot bc \\
&\iff ab\cdot (a\ldv c) = a(b(a\ldv c)) &&\iff ((a\rdv c)b)c = (a\rdv c)\cdot bc
\end{aligned}
\end{equation}

%

\begin{theorem}\label{thm:xyx}
Let $Q$ be an SUF loop. For all $x,y\in Q$ and
for all $m,n\in \ZZZ$, $x^m y\cdot x^n = x^m \cdot y x^n$, that is, $L(x^m) R(x^n) = R(x^n) L(x^m)$.
\end{theorem}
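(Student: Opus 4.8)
The plan is to fix $x$ and write $Q(m,n)$ for the assertion that $x^m y\cdot x^n = x^m\cdot yx^n$ holds for all $y$, i.e.\ that $L(x^m)$ and $R(x^n)$ commute. I want $Q(m,n)$ for every pair $(m,n)\in\ZZZ^2$. The engine is the chain of equivalences \eqref{eqn:abc} applied to the triple $(a,b,c)=(x^m,y,x^n)$, together with power-associativity (Theorem~\ref{thm:pa}), which lets me evaluate $x^m x^n = x^{m+n}$, $x^m\ldv x^n = x^{n-m}$, and $x^m\rdv x^n = x^{m-n}$ for all integers $m,n$.

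Feeding these into \eqref{eqn:abc} turns each clause into a statement of the form $Q(\cdot,\cdot)$. First I would record what each clause becomes: the clause $ab\cdot ac = a(b\cdot ac)$ reads $Q(m,m+n)$, the clause $ab\cdot(a\ldv c)=a(b(a\ldv c))$ reads $Q(m,n-m)$, the clause $(ac\cdot b)c=ac\cdot bc$ reads $Q(m+n,n)$, and the clause $((a\rdv c)b)c=(a\rdv c)\cdot bc$ reads $Q(m-n,n)$. Since \eqref{eqn:abc} asserts all of these are equivalent to $Q(m,n)$ itself, I obtain the four ``coordinate-shift'' equivalences $Q(m,n)\iff Q(m,n\pm m)$ and $Q(m,n)\iff Q(m\pm n,n)$. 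In words: at no cost I may add or subtract $m$ in the right-hand exponent, or add or subtract $n$ in the left-hand exponent.

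These are exactly the elementary steps of the (subtractive) Euclidean algorithm on the exponent pair $(m,n)$, and each preserves $\gcd(m,n)$. Iterating them, I reduce the larger coordinate modulo the smaller, alternating sides, until one coordinate becomes $0$; the terminal pair is $(\pm d,0)$ or $(0,\pm d)$ with $d=\gcd(m,n)$ (and $(0,0)$ if $m=n=0$). At any such pair $Q$ is a tautology, since $Q(k,0)$ says $x^k y = x^k y$ and $Q(0,k)$ says $yx^k = yx^k$. Because every step in the descent is a genuine equivalence, $Q(m,n)$ holds for the original pair, completing the proof. In particular this recovers Lemma~\ref{lem:xyx} as the case $m=1$.

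The hard part is really only the bookkeeping of the middle paragraph: correctly matching each of the four nontrivial clauses of \eqref{eqn:abc} to the right coordinate shift, and checking the division identities $x^m\ldv x^n = x^{n-m}$ and $x^m\rdv x^n = x^{m-n}$ from power-associativity for \emph{all} integers, including negative exponents. Once those translations are pinned down, the Euclidean descent is routine, and the invariance of $\gcd$ under the shifts is harmless precisely because the zero-coordinate base cases are trivially true for every value of $d$.
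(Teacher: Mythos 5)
Your proof is correct and is essentially the paper's own argument: both translate \eqref{eqn:abc} (applied with $a=x^m$, $b=y$, $c=x^n$, using power-associativity to evaluate the products and divisions of powers) into the four exponent-shift equivalences $Q(m,n)\iff Q(m,n\pm m)\iff Q(m\pm n,n)$ and then run a Euclidean descent on the exponent pair. The only cosmetic difference is that you keep signed exponents throughout and terminate at the tautological pairs with a zero coordinate, whereas the paper first arranges $m,n>0$ and bottoms out at the trivial case or at Lemma~\ref{lem:xyx} and its dual; the descent itself is the same algorithm.
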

\begin{proof}
For $m=0$ or $n=0$, there is nothing to prove. By repeated use of \eqref{eqn:abc}, we have that
$x^m y\cdot x^n = x^m\cdot yx^n$ iff $x^m y\cdot x^{im+n} = x^m\cdot yx^{im+n}$ iff
$x^{m+j(im+n)} y\cdot x^n = x^{m+j(im+n)}\cdot yx^n$ for all $i,j\in \ZZZ$. For given $m,n$, we may choose $i$
so that $im+n > 0$ and then choose $j$ so that $m+j(im+n) >  0$. Thus it is sufficient to consider the case
$m,n > 0$. Flexibility is the subcase $m=n$, so assume that $n > m$. Write $n = qm + r_1$ for some
$q,r_1\in \ZZZ$ with $0\leq r_1 < m$. By repeated application of \eqref{eqn:abc},
$x^m y\cdot x^n = x^m \cdot y x^n$ iff $x^m y\cdot x^{r_1} = x^m\cdot y x^{r_1}$. Repeating this argument, there
exists $0\leq r_2 < r_1$ such that $x^m y\cdot x^n = x^m \cdot y x^n$ iff $x^{r_2}y\cdot x^{r_1} = x^{r_2}\cdot yx^{r_1}$.
We can thus eventually reduce the exponents to the case covered by Lemma~\ref{lem:xyx} or its dual. Thus
$x^m y\cdot x^n = x^m\cdot yx^n$ holds. The case $m > n$ is proven similarly.
\end{proof}

\begin{example}
Table \ref{table:lsuf} is a left semi-universally flexible loop that is
not right semi-universally flexible. Here
$1 (3\ldv [2 (3\ldv 1)]) \neq [1 (3\ldv 2)] (3\ldv 1)$.
This loop is simple and has trivial nuclei. In fact, its only proper
subloops are the two-element subloops. This loop satisfies \LIP\, and \LALT\,
(which are the same property here). In addition, every mapping $R(x) L(x)^{-1}$
is an automorphism; this property is stronger than flexibility.
It is easy to check that none of the loops of order 5 have proper
left semi-universal flexibility, and so this example is the smallest such
loop. A computer search using \textsc{Mace4}\, shows that this is the only example
of order 6 up to isomorphism.

\begin{table}[htb]\label{table:lsuf}
\[
\begin{array}{c|cccccc|}
\cdot &  0 & 1 & 2 & 3 & 4 & 5 \\
\hline
  0 &  0& 1& 2& 3& 4& 5 \\
  1 &  1& 0& 3& 2& 5& 4 \\
  2 &  2& 5& 0& 4& 3& 1 \\
  3 &  3& 4& 5& 0& 1& 2 \\
  4 &  4& 2& 1& 5& 0& 3 \\
  5 &  5& 3& 4& 1& 2& 0 \\
\hline
\end{array}
\]
\caption{A left semi-universally flexible loop that is not right semi-universally flexible}
\end{table}
\end{example}

\section{UF IP loops are Moufang}
\label{sec:basics}

In a right SUF LIP loop, we may replace $v$ with $v\inv$ in \eqref{eq:hflex1} to obtain
\begin{align}
x\cdot v(y\cdot vx) &= (x\cdot vy) \cdot vx \label{eq:IPhflex1} \\
R(vx) L(v) L(x) &= L(v) L(x) R(vx) \label{eq:IPhflex-t1}
\end{align}
Similarly, in a left SUF RIP loop satisfying, replacing $u$ with $u\inv$ in \eqref{eq:hflex2} gives
\begin{align}
xu\cdot (yu\cdot x) = (xu\cdot y)u\cdot x \label{eq:IPhflex2} \\
R(u) R(x) L(xu) = L(xu) R(u) R(x) \label{eq:IPhflex-t2}
\end{align}
A loop satisfying \eqref{eq:IPhflex1} is clearly flexible (take $v=1$). Taking $y = v\inv$ in \eqref{eq:IPhflex1},
we obtain $x\cdot v (v\inv\cdot vx) = x \cdot vx$. Cancelling twice,
we get $v\inv\cdot vx = x$, which is the LIP.
We have proved

\begin{lemma}\label{lem:equiv1}
{\ }
\begin{enumerate}
\item A loop satisfies \eqref{eq:IPhflex1} if and only if it is a left SUF LIP loop.
\item A loop satisfies \eqref{eq:IPhflex2} if and only if it is a right SUF RIP loop.
\end{enumerate}
\end{lemma}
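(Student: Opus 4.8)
The plan is to prove (1); part (2) then follows by passing to the opposite loop, since \RIP\ is the mirror of \LIP\ and \eqref{eq:hflex2}, \eqref{eq:IPhflex2} are the mirror images of \eqref{eq:hflex1}, \eqref{eq:IPhflex1}. One direction is essentially already in hand: in a semi-universally flexible \LIP\ loop we have $v\ldv x = v\inv x$, so substituting $v\inv$ for $v$ in the defining identity \eqref{eq:hflex1} and using $(v\inv)\inv = v$ produces \eqref{eq:IPhflex1}, exactly as computed just before the statement. So the real content is the converse.

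For the converse I would first extract the two facts noted in the paragraph preceding the lemma. Setting $v=1$ in \eqref{eq:IPhflex1} gives flexibility, so $1\rdv x = x\ldv 1$ and each $x$ has a genuine two-sided inverse $x\inv$ with $xx\inv = x\inv x = 1$ and $(x\inv)\inv = x$. Setting $y = v\inv$ and using $vv\inv = 1$ collapses the right-hand side to $x\cdot vx$; cancelling $L(x)$ and then $L(v)$ leaves $v\inv\cdot vx = x$, that is $L(v\inv)L(v) = I$ for every $v$. Applying this with $v\inv$ in place of $v$ gives $L(v)L(v\inv) = I$ as well, so $L(v)\inv = L(v\inv)$, which is the \LIP.

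The crux is then a reversibility observation. Once the \LIP\ holds we again have $v\ldv x = v\inv x$, and $v\mapsto v\inv$ is an involution of $Q$; hence \eqref{eq:IPhflex1}, quantified over all $v$, is logically equivalent to \eqref{eq:hflex1}, quantified over all $v$. Concretely, replacing $v$ by $v\inv$ throughout \eqref{eq:IPhflex1} and rewriting each occurrence of $v\inv z$ as $v\ldv z$ returns \eqref{eq:hflex1}. Thus \eqref{eq:IPhflex1} forces both the \LIP\ and the defining identity \eqref{eq:hflex1}, which is exactly semi-universal flexibility, completing the converse.

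The step I expect to be most delicate is precisely this reversal: it requires that $v\mapsto v\inv$ be a bijection, which is not automatic for an arbitrary loop but is guaranteed here because flexibility — obtained at $v=1$, before the \LIP\ is even invoked — already yields well-defined two-sided inverses. With that in place everything else is pure cancellation, and part (2) is the verbatim mirror image under $x\cdot y \mapsto y\cdot x$.
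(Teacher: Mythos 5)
Your proposal is correct and follows essentially the same route as the paper: the paper likewise derives \eqref{eq:IPhflex1} from \eqref{eq:hflex1} via the substitution $v\mapsto v\inv$ under \LIP, and for the converse takes $v=1$ to get flexibility (hence well-defined two-sided inverses) and $y=v\inv$ with two cancellations to get \LIP, after which the substitution is reversed; part (2) is the mirror argument in both treatments. The only difference is one of emphasis: you spell out the reversibility step (that $v\mapsto v\inv$ is an involution once inverses exist, so \eqref{eq:IPhflex1} converts back into \eqref{eq:hflex1}), which the paper leaves implicit in its ``we have proved'' conclusion.
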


\begin{lemma}\label{lem:sufip-alt}
Every SUF IP loop is alternative.
\end{lemma}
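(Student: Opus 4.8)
The plan is to establish \LALT\ and \RALT\ separately; since ``alternative'' means exactly $\LALT \wedge \RALT$, that will finish the lemma. Because an IP loop (indeed any \AAIP\ loop) is isomorphic to its opposite, once \LALT\ is in hand \RALT\ comes for free by mirroring, so the real content is to produce \LALT. I would first record what is available: an SUF IP loop is left SUF, right SUF, \LIP\ and \RIP, so by Lemma~\ref{lem:equiv1} it satisfies both \eqref{eq:IPhflex1} and \eqref{eq:IPhflex2}, equivalently the operator identities \eqref{eq:IPhflex-t1} and \eqref{eq:IPhflex-t2}. Power-associativity and the equivalence chain \eqref{eqn:abc} are also at my disposal.

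The key move is to evaluate one of the semi-flexibility operator identities at the identity element. Applying \eqref{eq:IPhflex-t2}, namely $R(u)R(x)L(xu) = L(xu)R(u)R(x)$, to $1$ and reading off both sides (the left side sends $1 \mapsto xu \mapsto xu\cdot x \mapsto (xu\cdot x)u$, the right side sends $1 \mapsto x \mapsto xu \mapsto (xu)(xu)$) gives
\[
(xu\cdot x)\,u \;=\; (xu)^2 .
\]
Now I would invoke \eqref{eqn:abc} with $a=b=x$ and $c=u$: its third form $(ac\cdot b)c = ac\cdot bc$ reads precisely $(xu\cdot x)u = xu\cdot xu$, which is the identity just obtained, while its first form $ab\cdot c = a\cdot bc$ reads $x^2u = x\cdot xu$, i.e.\ \LALT. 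Since these are equivalent for each fixed pair $x,u$, this yields \LALT\ throughout the loop.

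Finally, \RALT\ follows by the self-opposite symmetry noted in Section~\ref{sec:intro}: the mirror of \LALT\ is \RALT, and it holds in any \AAIP\ loop. (Equivalently, and symmetrically, one evaluates the companion identity \eqref{eq:IPhflex-t1} at $1$ to get $v(x\cdot vx) = (vx)^2$, which is the mirror statement and delivers \RALT\ via the dual of \eqref{eqn:abc}.) With both \LALT\ and \RALT\ established, $Q$ is alternative.

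I expect the only real obstacle to be spotting the two economical moves that make this short: evaluating the semi-flexibility operator identity at the identity element to collapse it to the clean relation $(xu\cdot x)u = (xu)^2$, and then recognizing that relation as one of the already-catalogued equivalent forms of left alternativity in \eqref{eqn:abc}. Everything after that is routine bookkeeping with the inverse property and the opposite-loop symmetry.
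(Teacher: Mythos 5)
Your overall plan (derive one clean two-variable identity, feed it into \eqref{eqn:abc}, then mirror) has the same shape as the paper's proof, but your key step fails, and the failure is fatal rather than cosmetic. In this paper the translation maps act on the right and compose left-to-right: that is exactly how \eqref{eq:IPhflex-t2} encodes \eqref{eq:IPhflex2}, with the argument $y$ entering first through $R(u)$, not through $L(xu)$. Consequently, applying $R(u)R(x)L(xu) = L(xu)R(u)R(x)$ to $1$ gives $1\mapsto u\mapsto ux\mapsto xu\cdot ux$ on one side and $1\mapsto xu\mapsto xu\cdot u\mapsto (xu\cdot u)\cdot x$ on the other, i.e.\ $xu\cdot ux = (xu\cdot u)\cdot x$, which is nothing but the $y=1$ instance of \eqref{eq:IPhflex2}. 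It is \emph{not} the identity $(xu\cdot x)\cdot u = (xu)^2$ that you claim. Indeed, your identity is exactly the form $(ac\cdot b)c = ac\cdot bc$ of \eqref{eqn:abc} with $a=b=x$, $c=u$, hence is \emph{equivalent} to \LALT{} itself; so once the evaluation is corrected, the step where you ``read it off'' is precisely the content of the lemma, and the argument is circular. The identity that evaluation at $1$ actually produces, $xu\cdot ux=(xu\cdot u)x$, is the instance of \eqref{eqn:abc} with $a=xu$, $b=u$, $c=x$, and none of its equivalent forms there is an alternative law. The same convention error infects your companion computation: \eqref{eq:IPhflex-t1} applied to $1$ yields $x(v\cdot vx) = xv\cdot vx$ (the $y=1$ case of \eqref{eq:IPhflex1}), not $v(x\cdot vx)=(vx)^2$.

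The missing idea, which is what the paper supplies, is that one must \emph{substitute} before evaluating at $1$, and must compare two \emph{different} instances. The paper replaces $v$ by $yx\inv$ in \eqref{eq:IPhflex-t1} (using RIP to simplify $vx$ to $y$), getting $R(y)L(yx\inv)L(x) = L(yx\inv)L(x)R(y)$, and then replaces $x$ by $xy$ in that, getting $R(y)L(x\inv)L(xy) = L(x\inv)L(xy)R(y)$. Applying both to $1$ gives $x\cdot yx\inv y = xyx\inv\cdot y$ and $xy\cdot x\inv y = xyx\inv\cdot y$ (flexibility together with the IP makes $xyx\inv$ unambiguous), whence $x(y\cdot (x\ldv y)) = xy\cdot(x\ldv y)$ by LIP. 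Only now does Lemma~\ref{lem:assoc} apply, in form (iii) with $a=x$ and $b=c=y$, giving \RALT, and the IP then gives \LALT. Evaluating a single operator identity at $1$ can never suffice on its own: since the operator identity is just the element identity with the free variable as the argument, plugging in $1$ merely recovers a special case of the axiom you started from.
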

\begin{proof}
Replace $v$ with $yx\inv$ in \eqref{eq:IPhflex-t1} to get
\begin{equation}\label{eqn:sufip-alt-tmp1}
R(y) L(yx\inv) L(x) = L(yx\inv) L(x) R(y)\,.
\end{equation}
In this, replace $x$ with $xy$ to get
\begin{equation}\label{eqn:sufip-alt-tmp2}
R(y) L(x\inv) L(xy) = L(x\inv) L(xy) R(y)\,.
\end{equation}
Apply both \eqref{eqn:sufip-alt-tmp1} and \eqref{eqn:sufip-alt-tmp2} to the identity element $1$ to obtain,
respectively,
\begin{align*}
  x\cdot yx\inv y &= xyx\inv\cdot y \\
  xy\cdot x\inv y &= xyx\inv\cdot y\,.
\end{align*}
Thus $x\cdot yx\inv y = xy\cdot x\inv y$. By Lemma~\ref{lem:assoc} and the LIP, $x\cdot yy = xy\cdot y$,
that is \RALT\, holds. Since $Q$ has the IP, \LALT\, also holds.
\end{proof}

In a UF IP loop, we may replace $u$ with $u\inv$ and $v$ with $v\inv$ in \eqref{eq:uflex}, to obtain
\begin{align}
xu\cdot v(yu\cdot vx) &= (xu\cdot vy)u\cdot vx \label{eq:IPuflex} \\
R(u) R(vx) L(v) L(xu) &= L(v) L(xu) R(u) R(vx) \label{eq:IPuflex-t}
\end{align}
Replacing $v$ with $vx\inv$, $u$ with $x\inv u$, and then $x$ with $x\inv$ gives:
\begin{align}
u [vx\cdot (y\cdot xu)v] &= [u(vx\cdot y)\cdot xu]v \label{eq:IPuflex2} \\
R(xu) R(v) L(vx) L(u) &= L(vx) L(u) R(xu) R(v) \label{eq:IPuflex-t2}
\end{align}

\begin{lemma}\cite{Syr1}
\label{lem:equiv2}
A loop satisfies \eqref{eq:IPuflex} or \eqref{eq:IPuflex2} if and only
if it is a universally flexible, IP loop.
\end{lemma}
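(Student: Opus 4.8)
The plan is to prove the chain of equivalences: a loop is UF and IP $\iff$ it satisfies \eqref{eq:IPuflex} $\iff$ it satisfies \eqref{eq:IPuflex2}. The forward implications (that UF IP loops satisfy both equations) are already carried out in the substitution computations immediately preceding the lemma: from \eqref{eq:uflex}, the replacements $u\mapsto u\inv$, $v\mapsto v\inv$ produce \eqref{eq:IPuflex}, and the further replacements $v\mapsto vx\inv$, $u\mapsto x\inv u$, $x\mapsto x\inv$ produce \eqref{eq:IPuflex2}. So only the converses need work, and I would handle each by the same two-step pattern: first extract the inverse property by specialization, then undo the substitutions to recover universal flexibility \eqref{eq:uflex}.

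For the converse of \eqref{eq:IPuflex}, setting $u=1$ gives exactly \eqref{eq:IPhflex1} and setting $v=1$ gives exactly \eqref{eq:IPhflex2}. By Lemma~\ref{lem:equiv1} these say the loop is a left SUF \LIP\ loop and a right SUF \RIP\ loop respectively; since any two of \LIP, \RIP, \AAIP\ force the third, the loop has the IP. Now inversion $x\mapsto x\inv$ is an involutive bijection of an IP loop, so ``\eqref{eq:uflex} for all $u,v$'' and ``\eqref{eq:uflex} for all $u\inv,v\inv$'' are the same statement; rewriting the divisions via the IP identities, the latter is precisely \eqref{eq:IPuflex}. Hence \eqref{eq:IPuflex}, together with the IP just derived, is equivalent to \eqref{eq:uflex}, so the loop is UF IP.

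For the converse of \eqref{eq:IPuflex2} I would run the identical argument, the only wrinkle being a relabeling of variables. Setting $v=1$ in \eqref{eq:IPuflex2} yields $u\cdot x(y\cdot xu) = (u\cdot xy)\cdot xu$, which is \eqref{eq:IPhflex1} after the (universally quantified) renaming $x\mapsto u$, $v\mapsto x$; setting $u=1$ yields $vx\cdot(yx\cdot v) = ((vx\cdot y)x)v$, which is \eqref{eq:IPhflex2} after the renaming $x\mapsto v$, $u\mapsto x$. Lemma~\ref{lem:equiv1} then delivers the \LIP\ and the \RIP, hence the IP, and I would finish by reversing the three substitutions that produced \eqref{eq:IPuflex2}: in an IP loop each of $v\mapsto vx\inv$, $u\mapsto x\inv u$, $x\mapsto x\inv$ is a bijective reparametrization (using two-sided inverses), so their composite carries the instances of \eqref{eq:IPuflex2} bijectively onto those of \eqref{eq:IPuflex}, returning us to the already-settled case.

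The step I expect to be the genuine obstacle is the reversal of these substitutions. The forward derivations silently used the IP — for example the collapses $x\cdot x\inv u = u$ and $vx\inv\cdot x = v$ — so the substitutions are not mere renamings of free variables and cannot be undone in an arbitrary loop; one must be sure to have the IP in hand first. This is exactly why the specialization step is logically prior: the $u=1$ and $v=1$ instances yield only that the loop is SUF with the IP, which is strictly weaker than UF, so the full strength of \eqref{eq:IPuflex} or \eqref{eq:IPuflex2}, accessed through the reversal, is what upgrades SUF to UF. A secondary point to get right is the bookkeeping of the variable permutation that identifies the specializations of \eqref{eq:IPuflex2} with \eqref{eq:IPhflex1} and \eqref{eq:IPhflex2}.
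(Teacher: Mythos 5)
Your proposal is correct and follows essentially the same route as the paper's own proof: the forward direction is exactly the substitution computations preceding the lemma, and the converse specializes $u=1$ and $v=1$ to recover \eqref{eq:IPhflex1} and \eqref{eq:IPhflex2}, invokes Lemma~\ref{lem:equiv1} to obtain the IP (via LIP $+$ RIP), and then reverses the now-bijective inverse substitutions to recover \eqref{eq:uflex} --- a final step the paper leaves implicit but which you rightly identify and carry out explicitly. One small quibble: your aside that SUF IP is \emph{strictly} weaker than UF is precisely what Problems~\ref{prb:prob1} and \ref{prb:prob2} leave open; the accurate justification for the reversal step is merely that the specializations alone are not known to imply UF, not that they provably fail to.
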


\begin{proof}
We have established the ``if" direction. If, say, \eqref{eq:IPuflex2} holds,
then taking $u = 1$ gives \eqref{eq:IPhflex2} (after relabelling), and taking
$v = 1$ gives \eqref{eq:IPhflex1}. Then apply Lemma~\ref{lem:equiv1}.
\end{proof}

\begin{proof}[Proof of Theorem~\ref{thm:main1}]
We use the IP and alternative properties
without comment. Take $v = x\inv$ in \eqref{eq:IPuflex-t2}
and rearrange to get
\[
R(x\inv ) L(u) R(x) = R(u\inv x\inv ) L(u) R(xu)
\]
Multiply on the right by $R(v)$ and apply \eqref{eq:IPuflex-t2} again
to obtain
\begin{eqnarray*}
R(x\inv ) L(u) R(x) R(v) &=& R((xu)\inv ) L((vx)\inv ) [ L(vx) L(u) R(xu) R(v) ] \\
&=& R((xu)\inv ) L((vx)\inv ) R(xu) R(v) L(vx) L(u).
\end{eqnarray*}
Apply both sides to $x\inv v\inv = (vx)\inv$:
\begin{eqnarray*}
[u \cdot x\inv v\inv x\inv ] R(x) R(v) &=& [(vx)\inv \cdot (vx)\inv (xu)\inv] R(xu) R(v) L(vx) L(u) \\
&=& [(vx)^{-2}] R((xu)\inv ) R(xu) R(v) L(vx) L(u) \\
&=& [(x\inv v\inv )^2 v] L(vx) L(u) \\
&=& x\inv  L((vx)\inv ) L(vx) L(u) \\
&=& u R(x\inv )
\end{eqnarray*}
Now replace $x$ with $x\inv$, $v$ with $v\inv$, and rearrange to get
\[
u R(xvx) = u R(x) R(v) R(x) ,
\]
which is the Moufang identity (N1).
\end{proof}

\section{Middle Bol loops}
\label{sec:mbol}

Since left Bol loops are universal for both \LIP\, and \LALT, it is natural to guess that the variety
of middle Bol loops (universal for \AAIP) is related to the variety of UF loops, perhaps in the sense that one variety is contained
in the other \cite{Sh3,Sh4,Syr1,Syr2}. It is true that small middle Bol loops are UF and vice versa. However, it turns out
that neither variety is contained in the other.

\begin{example}
This example shows that a middle Bol loop need not be flexible and a flexible middle Bol loop need not be SUF.
Table~\ref{table:16} is a flexible, middle Bol loop such that the isotope given by $x \circ y := x(8\ldv y)$ is not flexible. In
particular, $2\circ (0\circ 2) \neq (2\circ 0) \circ 2$.

\begin{table}[htb]
{ 
\footnotesize
\arraycolsep=1.2pt
\[
\begin{array}{c|cccccccccccccccc|}
\cdot &  0& 1& 2& 3& 4& 5& 6& 7& 8& 9&10&11&12&13&14&15 \\
\hline
  0 &  0& 1& 2& 3& 4& 5& 6& 7& 8& 9&10&11&12&13&14&15 \\
  1 &  1& 0& 3& 2& 5& 4& 7& 6& 9& 8&11& 10&13&12&15&14 \\
  2 &  2& 3& 0& 1& 6& 7& 4& 5&10&11& 8& 9&14&15&12&13 \\
  3 &  3& 2& 1& 0& 7& 6& 5& 4&11& 10& 9& 8&15&14&13&12 \\
  4 &  4& 5& 6& 7& 0& 1& 2& 3&12&13&14&15& 8& 9&10&11 \\
  5 &  5& 4& 7& 6& 1& 0& 3& 2&13&12&15&14& 9& 8&11&10 \\
  6 &  6& 7& 4& 5& 2& 3& 0& 1&14&15&12&13&10&11& 8& 9 \\
  7 &  7& 6& 5& 4& 3& 2& 1& 0&15&14&13&12&11&10& 9& 8 \\
  8 &  8& 9&10&11&12&13&14&15& 0& 1& 4& 5& 2& 3& 7& 6 \\
  9 &  9& 8&11&10&13&12&15&14& 1& 0& 5& 4& 3& 2& 6& 7 \\
 10 & 10&11& 8& 9&14&15&12&13& 4& 5& 0& 1& 7& 6& 2& 3 \\
 11 & 11&10& 9& 8&15&14&13&12& 5& 4& 1& 0& 6& 7& 3& 2 \\
 12 & 12&13&14&15& 8& 9&10&11& 2& 3& 7& 6& 0& 1& 4& 5 \\
 13 & 13&12&15&14& 9& 8&11&10& 3& 2& 6& 7& 1& 0& 5& 4 \\
 14 & 14&15&12&13&10&11& 8& 9& 7& 6& 2& 3& 4& 5& 0& 1 \\
 15 & 15&14&13&12&11&10& 9& 8& 6& 7& 3& 2& 5& 4& 1& 0 \\
\hline
\end{array}
\]
}  
\caption{A flexible, middle Bol loop with a nonflexible isotope}
\label{table:16}
\end{table}
\end{example}

\begin{example} The following construction slightly generalizes an example of Shelekhov \cite{Sh2, Sh4}.
Let $V$ be a vector space over a field of characteristic not 2, and let
$\phi : V\to \mathrm{Aut}(V\wedge V)$ be a homomorphism from the additive group of $V$
into the automorphism group of the wedge product $V\wedge V$.
On $W := V \oplus (V\wedge V)$, define
\[
(x , \omega) \cdot (y , \eta) := (x + y,\ \omega + \phi_{-x}(\eta ) + \phi_{-2x}(x\wedge y))
\]
Then $(W, \cdot)$ is a universally flexible loop, but this loop does not, in general, have the
AAIP. (Shelekhov's example is the case where $V = \mathbb{R}^2$,
$V\wedge V \cong \mathbb{R}$, and $\phi_x = e^x$.)
\end{example}

\begin{example}\label{ex:rb}
This example was found by using central extensions as described later.
Table \ref{suf-notMBol} shows a commutative UF loop which does not have the AAIP. Up to isomorphism,
this loop has only two loop isotopes. The other one, given by $x\circ y = x(3\ldv y)$ for example,
has the AAIP, but is neither middle Bol nor commutative.

This loop is also minimal in the sense that all of its proper subloops and all of its factor loops by
nontrivial normal subloops are middle Bol loops.

Interestingly, both this loop and its other isotope satisfy the semi-automorphic inverse property $(xyx)^{-1} = x^{-1} y^{-1} x^{-1}$.
Thus this property is universal in this loop.

\begin{table}[ht]
  \resizebox{\textwidth}{!}{%
  \begin{tabular}{c|cccccccccccccccccccccccccccccccc|}
  $\cdot$ & 1& 2& 3& 4& 5& 6& 7& 8& 9& 10& 11& 12& 13& 14& 15& 16& 17& 18& 19& 20& 21& 22& 23& 24& 25& 26& 27& 28& 29& 30& 31& 32 \\
  \hline\hline
  1& 1&  2&  3&  4&  5&  6&  7&  8&  9& 10& 11& 12& 13& 14& 15& 16& 17& 18& 19& 20& 21& 22& 23& 24& 25& 26& 27& 28& 29& 30& 31& 32 \\
  2& 2&  1&  4&  3&  6&  5&  8&  7& 10&  9& 12& 11& 14& 13& 16& 15& 18& 17& 20& 19& 22& 21& 24& 23& 26& 25& 28& 27& 30& 29& 32& 31 \\
  3& 3&  4&  7&  8&  9& 10& 11& 12& 13& 14&  1&  2& 15& 16&  5&  6& 27& 28& 23& 24& 31& 32& 19& 20& 29& 30& 17& 18& 25& 26& 21& 22 \\
  4& 4&  3&  8&  7& 10&  9& 12& 11& 14& 13&  2&  1& 16& 15&  6&  5& 28& 27& 24& 23& 32& 31& 20& 19& 30& 29& 18& 17& 26& 25& 22& 21 \\
  5& 5&  6&  9& 10&  1&  2& 14& 13&  3&  4& 16& 15&  8&  7& 12& 11& 21& 22& 25& 26& 17& 18& 29& 30& 19& 20& 31& 32& 23& 24& 27& 28 \\
  6& 6&  5& 10&  9&  2&  1& 13& 14&  4&  3& 15& 16&  7&  8& 11& 12& 22& 21& 26& 25& 18& 17& 30& 29& 20& 19& 32& 31& 24& 23& 28& 27 \\
  7& 7&  8& 11& 12& 14& 13&  1&  2& 16& 15&  3&  4&  6&  5& 10&  9& 23& 24& 27& 28& 29& 30& 17& 18& 31& 32& 19& 20& 21& 22& 25& 26 \\
  8& 8&  7& 12& 11& 13& 14&  2&  1& 15& 16&  4&  3&  5&  6&  9& 10& 24& 23& 28& 27& 30& 29& 18& 17& 32& 31& 20& 19& 22& 21& 26& 25 \\
  9& 9& 10& 13& 14&  3&  4& 16& 15&  7&  8&  6&  5& 12& 11&  2&  1& 31& 32& 29& 30& 27& 28& 25& 26& 23& 24& 21& 22& 19& 20& 17& 18 \\
  10& 10&  9& 14& 13&  4&  3& 15& 16&  8&  7&  5&  6& 11& 12&  1&  2& 32& 31& 30& 29& 28& 27& 26& 25& 24& 23& 22& 21& 20& 19& 18& 17 \\
  11& 11& 12&  1&  2& 16& 15&  3&  4&  6&  5&  7&  8& 10&  9& 14& 13& 19& 20& 17& 18& 25& 26& 27& 28& 21& 22& 23& 24& 31& 32& 29& 30 \\
  12& 12& 11&  2&  1& 15& 16&  4&  3&  5&  6&  8&  7&  9& 10& 13& 14& 20& 19& 18& 17& 26& 25& 28& 27& 22& 21& 24& 23& 32& 31& 30& 29 \\
  13& 13& 14& 15& 16&  8&  7&  6&  5& 12& 11& 10&  9&  1&  2&  3&  4& 30& 29& 32& 31& 24& 23& 22& 21& 28& 27& 26& 25& 18& 17& 20& 19 \\
  14& 14& 13& 16& 15&  7&  8&  5&  6& 11& 12&  9& 10&  2&  1&  4&  3& 29& 30& 31& 32& 23& 24& 21& 22& 27& 28& 25& 26& 17& 18& 19& 20 \\
  15& 15& 16&  5&  6& 12& 11& 10&  9&  2&  1& 14& 13&  3&  4&  7&  8& 26& 25& 22& 21& 20& 19& 32& 31& 18& 17& 30& 29& 28& 27& 24& 23 \\
  16& 16& 15&  6&  5& 11& 12&  9& 10&  1&  2& 13& 14&  4&  3&  8&  7& 25& 26& 21& 22& 19& 20& 31& 32& 17& 18& 29& 30& 27& 28& 23& 24 \\
  17& 17& 18& 27& 28& 21& 22& 23& 24& 31& 32& 19& 20& 30& 29& 26& 25& 13& 14& 11& 12&  7&  8&  5&  6& 16& 15&  3&  4&  2&  1&  9& 10 \\
  18& 18& 17& 28& 27& 22& 21& 24& 23& 32& 31& 20& 19& 29& 30& 25& 26& 14& 13& 12& 11&  8&  7&  6&  5& 15& 16&  4&  3&  1&  2& 10&  9 \\
  19& 19& 20& 23& 24& 25& 26& 27& 28& 29& 30& 17& 18& 32& 31& 22& 21& 11& 12&  5&  6& 16& 15&  3&  4&  1&  2& 13& 14&  9& 10&  8&  7 \\
  20& 20& 19& 24& 23& 26& 25& 28& 27& 30& 29& 18& 17& 31& 32& 21& 22& 12& 11&  6&  5& 15& 16&  4&  3&  2&  1& 14& 13& 10&  9&  7&  8 \\
  21& 21& 22& 31& 32& 17& 18& 29& 30& 27& 28& 25& 26& 24& 23& 20& 19&  7&  8& 16& 15& 13& 14&  2&  1& 11& 12&  9& 10&  5&  6&  3&  4 \\
  22& 22& 21& 32& 31& 18& 17& 30& 29& 28& 27& 26& 25& 23& 24& 19& 20&  8&  7& 15& 16& 14& 13&  1&  2& 12& 11& 10&  9&  6&  5&  4&  3 \\
  23& 23& 24& 19& 20& 29& 30& 17& 18& 25& 26& 27& 28& 22& 21& 32& 31&  5&  6&  3&  4&  2&  1& 13& 14&  9& 10& 11& 12&  7&  8& 16& 15 \\
  24& 24& 23& 20& 19& 30& 29& 18& 17& 26& 25& 28& 27& 21& 22& 31& 32&  6&  5&  4&  3&  1&  2& 14& 13& 10&  9& 12& 11&  8&  7& 15& 16 \\
  25& 25& 26& 29& 30& 19& 20& 31& 32& 23& 24& 21& 22& 28& 27& 18& 17& 16& 15&  1&  2& 11& 12&  9& 10&  5&  6&  8&  7&  3&  4& 13& 14 \\
  26& 26& 25& 30& 29& 20& 19& 32& 31& 24& 23& 22& 21& 27& 28& 17& 18& 15& 16&  2&  1& 12& 11& 10&  9&  6&  5&  7&  8&  4&  3& 14& 13 \\
  27& 27& 28& 17& 18& 31& 32& 19& 20& 21& 22& 23& 24& 26& 25& 30& 29&  3&  4& 13& 14&  9& 10& 11& 12&  8&  7&  5&  6& 16& 15&  1&  2 \\
  28& 28& 27& 18& 17& 32& 31& 20& 19& 22& 21& 24& 23& 25& 26& 29& 30&  4&  3& 14& 13& 10&  9& 12& 11&  7&  8&  6&  5& 15& 16&  2&  1 \\
  29& 29& 30& 25& 26& 23& 24& 21& 22& 19& 20& 31& 32& 18& 17& 28& 27&  2&  1&  9& 10&  5&  6&  7&  8&  3&  4& 16& 15& 13& 14& 11& 12 \\
  30& 30& 29& 26& 25& 24& 23& 22& 21& 20& 19& 32& 31& 17& 18& 27& 28&  1&  2& 10&  9&  6&  5&  8&  7&  4&  3& 15& 16& 14& 13& 12& 11 \\
  31& 31& 32& 21& 22& 27& 28& 25& 26& 17& 18& 29& 30& 20& 19& 24& 23&  9& 10&  8&  7&  3&  4& 16& 15& 13& 14&  1&  2& 11& 12&  5&  6 \\
  32& 32& 31& 22& 21& 28& 27& 26& 25& 18& 17& 30& 29& 19& 20& 23& 24& 10&  9&  7&  8&  4&  3& 15& 16& 14& 13&  2&  1& 12& 11&  6&  5 \\
  \hline
  \end{tabular}}
  \caption{A UF loop which does not satisfy \AAIP}
  \label{suf-notMBol}
\end{table}
\end{example}

\section{Diassociativity}
\label{sec:diassoc}

To prove that a loop is diassociative, it is natural to introduce the following definition:

\begin{definition}
Given elements $x_1,\ldots , x_n$ in a loop,
let $A(x_1,\ldots , x_n)$ abbreviate the statement that
$x_1,\ldots , x_n$ \emph{associates}, in the sense that all associated
products of the elements in that order are equal.
\end{definition}

The number of such products is the Catalan number $C_{n-1} = \frac{(2n-2)!}{n!(n-1)!}$.
For example, when $n=4$, $C_{4-1} = 5$, and $A(w,x,y,z)$ iff
\[
w \cdot (x \cdot y z) =
w \cdot (x y\cdot  z) =
w x \cdot y z =
(w \cdot x y)\cdot z =
(w x \cdot y)\cdot z  \ \ .
\]

\begin{definition}
In an IP loop, let $H(n)$ abbreviate the statement that for all $a,b$ in the loop,
$A(x_1, \ldots , x_n)$ holds for all $x_1, \ldots, x_n \in \{1, a, a\inv, b, b\inv\}$.
\end{definition}

\begin{lemma}
In any IP loop $Q$:
\begin{itemize}
\item
$H(n) \to H(m)$ whenever $m \le n$.
\item
$Q$ is diassociative iff $H(n)$ holds for all $n$.
\item
$H(3)$ holds iff $Q$ is flexible and alternative.
\end{itemize}
\end{lemma}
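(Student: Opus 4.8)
The plan is to establish the three items in turn; the first two are structural, while the third carries the genuine computation. For $H(n)\to H(m)$ with $m\le n$ I would pad with the identity. Given $x_1,\dots,x_m\in\{1,a,a\inv,b,b\inv\}$, the length-$n$ tuple $x_1,\dots,x_m,1,\dots,1$ still has all entries in the allowed set, and any fully-associated product $P$ of $x_1,\dots,x_m$ equals the product of the padded tuple parenthesized as $(\cdots((P\cdot 1)\cdot 1)\cdots)$. Since $H(n)$ equates all length-$n$ products, all such $P$ agree, which is exactly $A(x_1,\dots,x_m)$.

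For ``diassociative iff $H(n)$ for all $n$'' the forward direction is immediate: in the group $\langle a,b\rangle$ every product associates and $\{1,a,a\inv,b,b\inv\}\subseteq\langle a,b\rangle$, so each $A(x_1,\dots,x_n)$ holds. For the converse I first describe $\langle a,b\rangle$ explicitly. Using the IP in the forms $u\ldv v=u\inv v$, $u\rdv v=uv\inv$ and $(uv)\inv=v\inv u\inv$, the set $S$ of all products (in every order and parenthesization) of elements of $\{a,a\inv,b,b\inv\}$, together with $1$, is closed under $\cdot,\ldv,\rdv$ (the \AAIP\ guarantees $u\inv\in S$ for every word $u$), and hence $S=\langle a,b\rangle$. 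Given $u,v,w\in S$, associativity is trivial if one of them is $1$; otherwise each is the value of a word over $\{a,a\inv,b,b\inv\}$, and concatenating these words yields a length-$n$ tuple two of whose block-parenthesizations evaluate to $(uv)w$ and $u(vw)$. These are among the products equated by $H(n)$, so $S$ is associative, hence a group.

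For ``$H(3)$ iff flexible and alternative'' one direction is bookkeeping: taking $a=x$, $b=y$, the triples $(x,y,x)$, $(x,x,y)$, $(y,x,x)$ are instances of $H(3)$ asserting precisely \FLEX, \LALT\ and \RALT. For the converse I must show that in a flexible alternative IP loop every triple from $\{a,a\inv,b,b\inv\}$ associates. Triples containing $1$ are trivial. Using the $a\leftrightarrow b$ symmetry, the patterns with two like-letter entries adjacent, namely $(a^{\pm},a^{\pm},b^{\pm})$ and $(b^{\pm},a^{\pm},a^{\pm})$, associate by \LALT{} or \LIP{} and by \RALT{} or \RIP{} respectively, according as the like-letter signs agree or cancel, while the single-letter triples follow from the IP together with single-variable alternativity (e.g. $(aa)a\inv=a$ by the \RIP).

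The main obstacle is the ``middle'' pattern with opposite outer signs, e.g. $(a,b,a\inv)$, which is not itself an instance of \FLEX. Here I would apply flexibility with inner variable $ba\inv$: this gives $a((ba\inv)a)=(a(ba\inv))a$, and since $(ba\inv)a=b$ by the \RIP, the left-hand side collapses to $ab$; cancelling the trailing $a$ (again by \RIP) yields $a(ba\inv)=(ab)a\inv$, which is $A(a,b,a\inv)$. Passing to the opposite loop, under which an IP loop is anti-isomorphic to itself, supplies the mirror case $(a\inv,b,a)$, and this completes the verification of $H(3)$.
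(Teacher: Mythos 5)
The paper offers no proof of this lemma at all --- it is stated as a routine consequence of the definitions and the text moves straight on toward the stepping-up machinery --- so the only real question is whether your blind argument is correct, and it is; you have supplied exactly the bookkeeping the authors chose to omit. Your padding-with-$1$ argument for $H(n)\to H(m)$ is right, since the padded tuple still has entries in $\{1,a,a\inv,b,b\inv\}$ and each parenthesization of the short tuple extends to one of the long tuple with the same value. For the second item, the point needing care is the identification of $\langle a,b\rangle$ with the set $S$ of values of words over $\{a,a\inv,b,b\inv\}$ together with $1$: you correctly invoke \AAIP\ for closure of $S$ under inversion and then \LIP\ and \RIP\ (via $u\ldv v=u\inv v$, $u\rdv v=uv\inv$) for closure under the divisions, after which associativity of $S$ follows from $H(n)$ applied to concatenated words, with the two block parenthesizations giving $(uv)w$ and $u(vw)$. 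For the third item, the genuine content is the alternating pattern with opposite outer signs, $A(a,b,a\inv)$, which is not directly an instance of \FLEX, \LALT, \RALT, or the inverse properties; your derivation --- flex on the inner variable $ba\inv$, collapse $(ba\inv)a=b$ by \RIP, then cancel the trailing $a$ by \RIP\ again --- is correct, and since it establishes the identity $(xy)x\inv=x(yx\inv)$ for all $x,y$, every remaining sign pattern (including the mirror $(a\inv,b,a)$) follows simply by substituting $x\mapsto a\inv$ or $y\mapsto b\inv$, so the appeal to the opposite loop, though valid, is not even needed. Your case analysis --- triples containing $1$, single-letter triples via IP and alternativity, adjacent like letters via \LALT/\LIP\ or \RALT/\RIP, alternating letters via \FLEX\ and the derived identity --- is exhaustive, so the proof is complete.
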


Moufang's original proof \cite{M1,M2} of what we now call Moufang's Theorem was inductive.
Moufang loops satisfy a number of additional properties which allow for a proof of
Moufang's Theorem without an explict induction; this is the approach taken in \cite{Br}.
More recently, Dr\'{a}pal found a simpler proof in the spirit of Moufang's original proof \cite{Dr}.

Our proof of diassociativity will prove $H(n)$ by induction on $n$. Definition \ref{def:step} below introduces
a class of loops in which one can step up easily from $H(n)$ to $H(n+1)$.
This class includes all SUF IP loops, and hence all Moufang loops.

\begin{definition}
If $X$ is a subset of an IP loop $Q$, then $X$ \emph{strongly associates}
iff $A(x,y,z)$ for all $x,y,z \in X \cup X\inv$.
Let $\SA(x,y,z)$ abbreviate the statement that $\{x,y,z\}$ strongly associates.
\end{definition}

In Moufang loops, $A(x,y,z)$ is equivalent to $SA(x,y,z)$.

The notation $\SA(x,y,z)$ does not imply that $x,y,z$ are distinct.
If $Q$ is also alternative and flexible, then every $\{a,b\}$ strongly associates, and the assertion $\SA(a,b,c)$
is naturally written as $6 \cdot 8 = 48$ equations, since there are $6$ permutatons
of $\{a,b,c\}$ and then $2^3 = 8$ choices of which of $a,b,c$ to invert.
However, by IP, these $48$ come in $24$ equivalent pairs,
since $A(x,y,z) \leftrightarrow A(z\inv, y\inv, x\inv)$;
for example, $A(a,b\inv,c\inv)$ is equivalent to $A(c,b,a\inv)$.

\begin{definition}\label{def:step}
A \emph{stepping-up loop} is an alternative, flexible IP loop $Q$ satisfying the implication
$\SA(x,y,z) \to \SA(x,y,yz)$.
\end{definition}

Note that the property  $\SA(x,y,z)$ is invariant under permutations of $x,y,z$ and under replacing any of $x,y,z$ by its inverse.
Thus in a stepping-up loop, if $\SA(x,y,z)$ holds, then also $\SA(x\inv,y\inv, y\inv z\inv)$ , $\SA(x,y,zy)$ , $\SA(x, xy,z)$,
$\SA(x, x\inv y, z)$, etc.

Stepping up loops form a quasivariety because the implication $\SA(x,y,z) \to \SA(x,y,yz)$ can be written as a collection of
quasi-identities. Moufang loops are stepping up; this is a consequence of Moufang's Theorem or can be shown directly.
There exist some Steiner loops which are stepping up, but not all of them are.

\begin{lemma}
Every stepping-up loop is diassociative.
\end{lemma}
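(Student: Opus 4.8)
The plan is to prove the equivalent statement that $H(n)$ holds for all $n$ (the displayed lemma on $H(n)$ reduces diassociativity to exactly this) and to proceed by induction on $n$. Since a stepping-up loop is by definition a flexible, alternative IP loop, the third bullet of that lemma supplies the base case $H(3)$, while $H(1)$ and $H(2)$ are trivial. So the entire content lies in the inductive step: assuming $H(k)$ for all $k \le n$, deduce $H(n+1)$. Throughout I fix a pair $a,b$, call a consecutive factor of a word in $a,a\inv,b,b\inv$ a \emph{block}, and note that by the inductive hypothesis every block of length $\le n$ has a well-defined value, independent of association.

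First I would reduce $H(n+1)$ to a manageable family of triple identities. Using IP to pass freely between $A$ and $\SA$, and the fact that any two parenthesizations of a word are connected by single re-associations (each an instance $A(P,Q,R)$ for consecutive blocks $P,Q,R$), it suffices to prove enough such triples to reach the left-normed value. Writing $\ell(\cdots)$ for the left-normed product, the key is the merging identity $\ell(x_1\cdots x_m)\cdot \ell(x_{m+1}\cdots x_{n+1}) = \ell(x_1\cdots x_{n+1})$, which I would prove by peeling off the last letter $x_{n+1}$: applying $H(n)$ to the resulting length-$n$ merge reduces it to the single triple $\SA(P,Q,s)$ in which $s = x_{n+1}$ is a \emph{single generator} and $P,Q$ are blocks with $|P|+|Q| = n$. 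Thus $H(n+1)$ follows once every such $\SA(P,Q,s)$, with a generator in the third slot, has been established. This reduction is genuinely needed: triples with all three entries long, for instance $\SA((ab),(ab),(ab))$, cannot be produced by stepping up at all, and fortunately are never required.

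The engine for these triples is the stepping-up implication together with its symmetric consequences recorded after Definition \ref{def:step}: from $\SA(x,y,z)$ one may replace any one of the three entries by its product, on either side, with any other entry or that entry's inverse. I would establish $\SA(P,Q,s)$ by a secondary induction on $|P|+|Q|$, starting from the base generator-triples furnished by $H(3)$ and, at each step, appending one generator to $P$ or to $Q$ by multiplying that block by the current content of another slot. The generator $s$ in the third slot, together with transiently created short slots, serve as the multipliers, while IP is used to rewrite slot contents (for example $a\ldv(ab)=b$) so that the generator about to be appended actually occurs in some slot.

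The main obstacle is precisely this last point. A stepping-up move multiplies a slot by an \emph{entire} other slot rather than by a chosen single generator, so one cannot append an arbitrary new letter on demand: the letter must already sit in one of the other two slots at that moment. Making the secondary induction go through therefore requires a careful ordering of the construction, full exploitation of the permutation and inversion symmetries of $\SA$ (to rotate whichever slot currently holds the needed generator into multiplier position) and of IP (to rewrite slot contents), and a verification that the reachable triples include exactly the family $\SA(P,Q,s)$ demanded by the reduction. Getting this bookkeeping right, so that every required generator is available at the instant it is needed, is the crux; once it is in place the outer induction closes, $H(n)$ holds for all $n$, and diassociativity follows.
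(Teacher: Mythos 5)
Your outer induction on $H(n)$, the base case $H(3)$, and the reduction of $H(n+1)$ to merging identities settled by triples with one single-generator slot all track the paper's proof, which likewise reduces $H(n+1)$ to the equality of the $n$ products $P_j = (x_0\cdots x_j)\cdot(x_{j+1}\cdots x_n)$ and settles each comparison by one application of the stepping-up property. But the step you explicitly defer---establishing the triples $\SA(P,Q,s)$, $s = x_{n+1}$, by a secondary induction on $|P|+|Q|$---is not bookkeeping; it is where the plan breaks, and no ordering of moves repairs it for this particular family of triples. A stepping-up move multiplies one slot by the \emph{entire} current content of another slot (or its inverse), so the only way it can output a triple of blocks of total length $n+1$ from one of smaller total length is if some slot has, at one of its ends, a factor equal to another whole slot or its inverse. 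Take $n = 6$ and the word $b\,a\,b\,b\,a\inv\,b\,a$ with split $m = 3$: your reduction demands $\SA(bab,\; ba\inv b,\; a)$ (the two blocks are well-defined elements by $H(6)$). Here $a^{\pm 1}$ is an end letter of neither block, and neither block is a prefix or suffix of the other, so every stepping-up move having this triple as output has as input a triple of \emph{larger} total length. Your secondary induction on $|P|+|Q|$ can therefore never reach it: the generator $a$ is never ``available at the instant it is needed,'' which is exactly the failure you name but do not resolve.

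The paper's proof contains the device you are missing. It never peels the last letter; it compares adjacent splits $P_j$ and $P_{j-1}$, so the needed triple is $\SA(x_0\cdots x_{j-1},\, x_j,\, x_{j+1}\cdots x_n)$, with the single generator at the split point, where it has the four neighbors $x_0, x_{j-1}, x_{j+1}, x_n$. If one of these equals $x_j^{\pm 1}$, drop it, apply $H(n)$, and re-append it with one stepping-up move (this disposes of the word above). If instead all four lie in $\{b, b\inv\}$ (taking $x_j = a$), the paper switches target and proves $P_j = P_0$ directly, via the triple $\SA(b,\, x_1\cdots x_j,\, x_{j+1}\cdots x_n)$: now the single-generator slot is $b$, which matches the neighbor $x_{j+1}$, so drop-and-re-append works again. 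An inner induction on $j$ ties the cases together, and the endpoint equalities $P_0 = P_1$ and $P_{n-2} = P_{n-1}$ are obtained by growing the long slot letter by letter from $\SA(a,b,1)$---possible there, and only there, because the other two slots are the single generators $a$ and $b$, so every needed letter is always available. The missing idea is thus concrete: place the distinguished generator at the split point rather than at the end of the word, and keep a fallback comparison with $P_0$ for the case where no neighbor of the split matches it. (A minimal repair of your own scheme---alternately peeling the \emph{first} letter whenever the last letter matches no block end---amounts to the same case analysis.)
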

\begin{proof}
We shall prove by induction that $H(n)$ holds for all $n$.
We already have $H(3)$ by the IP, \FLEX, \LALT\, and \RALT.
Now, we fix $n \ge 3$ and assume $H(n)$; we shall prove $H(n+1)$,
so fix $a,b$, and fix $x_0, x_1, \ldots, x_n \in \{1, a, a\inv, b, b\inv\}$;
we shall prove $A(x_0, x_1, \ldots , x_n)$.
Note that  $H(n)$ implies
$\SA(x_1 \cdots x_j ,\; y_1 \cdots y_k ,\; z_1 \cdots z_\ell )$
whenever $j + k + \ell \le n$ and all
$x_1, \ldots, x_j , y_1, \ldots, y_k ,\allowbreak z_1, \ldots, z_\ell
\in \{1, a, a\inv, b, b\inv\}$.

In view of $H(n)$, there are $n$, not $C_n$, products that we must prove
are equal, namely the
$P_\ell := (x_0 \cdots x_\ell) \cdot (x_{\ell+1} \cdots x_n)$
for $0 \le \ell < n$.

$P_0 = P_1$ and $P_{n-2} = P_{n-1}$: To prove $P_0 = P_1$, Without loss of generality,
$x_0 = a$, so we must prove
$a \cdot (x_1 \cdots x_n) = (a_0 x_1) \cdot (x_2 \cdots x_n)$.
This is clear from ALT+IP if $x_1 \in \{1,a,a\inv\}$,
so Without loss of generality, $x_1 = b$.  Then the result follows from
$\SA(a, b,\; x_2 \cdots x_n)$, which in turn follows from
$\SA(a, b, 1)$ and the stepping-up property.

Now, we prove by induction on $\ell$ that $P_\ell = P_0$,
so we fix $\ell$, assume that $P_0 = \cdots = P_{\ell -1}$,
and we prove $P_\ell = P_0$.  We may assume that $2 \le \ell \le n-2$.
Without loss of generality, $x_\ell = a$.  We first attempt to prove
$P_\ell = P_{\ell -1}$; that is,
\[
(x_0 \cdots x_{\ell -1} a) \cdot (x_{\ell+1} \cdots x_n) =
(x_0 \cdots x_{\ell -1}) \cdot (a x_{\ell+1} \cdots x_n) \ \ .
\]
There are three cases.

\noindent Case 1: at least one of $x_0,x_{\ell -1}, x_{\ell+1}, x_n$ is $1$;
then just apply $H(n)$.

\noindent Case 2: at least one of $x_0,x_{\ell -1}, x_{\ell+1}, x_n$ is
$a$ or $a \inv$; then we can verify
$\SA(x_0 \cdots x_{\ell -1} ,\; a ,\; x_{\ell+1} \cdots x_n)$;
for example, if $ x_{\ell+1} = a\inv$, we have
$\SA(x_0 \cdots x_{\ell -1} ,\; a ,\; x_{\ell+2} \cdots x_n)$ from $H(n)$,
and then $\SA(x_0 \cdots x_{\ell -1} ,\; a ,\; x_{\ell+1} \cdots x_n)$
by the stepping-up property.

\noindent Case 3: assume that
$x_0,x_{\ell -1}, x_{\ell+1}, x_n \in \{b, b\inv\}$.
Now, we prove instead $P_\ell = P_0$.  Without loss of generality, $x_0 = b$, so we must prove
\[
(b x_1 \cdots x_\ell) \cdot (x_{\ell+1} \cdots x_n) =
b \cdot  (x_1 \cdots x_\ell x_{\ell+1} \cdots x_n) \ \ .
\]
But now we use $\SA(b,\; x_1 \cdots x_\ell,\; x_{\ell+1} \cdots x_n)$,
which follows
(using $x_{\ell+1} \in \{b, b\inv\}$)
from $\SA(b,\; x_1 \cdots x_\ell,\; x_{\ell+2} \cdots x_n)$,
and this in turn follows from $H(n)$.
\end{proof}

\begin{lemma}
Every SUF IP loop is a stepping-up loop.
\end{lemma}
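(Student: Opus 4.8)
The plan is to verify, for an SUF IP loop $Q$, the three clauses of Definition~\ref{def:step} in turn, the first two being routine and the third carrying all the weight. That $Q$ is IP is the hypothesis; it is flexible because putting $v=1$ in \eqref{eq:hflex1} collapses (via $1\ldv w = w$) to $x\cdot yx = xy\cdot x$; and it is alternative by Lemma~\ref{lem:sufip-alt}. Thus everything reduces to establishing the implication $\SA(x,y,z)\to\SA(x,y,yz)$ for all $x,y,z$. Since the predicate $\SA$ is invariant under permuting and inverting its arguments, it suffices to prove this one form, and it amounts to deducing the $48$ equations of $\SA(x,y,yz)$ from the $48$ equations of $\SA(x,y,z)$.

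The engine I would use is the equivalence chain \eqref{eqn:abc}, which in this IP setting I would record as a family of \emph{slides}: holding the middle factor fixed, $A(a,b,c)\iff A(a,b,ac)\iff A(ac,b,c)$, together with the inverse variants replacing $c$ by $a\inv c$ and $a$ by $ca\inv$. Supplementing the slides with the IP symmetry $A(u,v,w)\iff A(w\inv,v\inv,u\inv)$, with \FLEX\ (giving $A(a,m,a)$), with \LALT\ and \RALT\ (giving $A(a,a,b)$ and $A(a,b,b)$), with the cancellation laws $A(a\inv,a,b)$ and $A(b,a,a\inv)$ from \LIP/\RIP, and with Theorem~\ref{thm:xyx} (giving $A(x^m,w,x^n)$ for \emph{every} $w$), I would dispose of every triple of $\SA(x,y,yz)$ in which a factor of $yz$ can be cancelled against a neighbouring $y^{\pm1}$, or in which some argument is repeated, or in which the two outer arguments are powers of one element. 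In each such case one literally slides back to a triple guaranteed by $\SA(x,y,z)$, or invokes flexibility/alternativity directly.

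The reduction then stalls, and this is where the main obstacle lies. Every slide, the IP symmetry, and Theorem~\ref{thm:xyx} all preserve the central factor up to inverse (or force the two outer factors to be powers of a common element), whereas the triples supplied by $\SA(x,y,z)$ have their middle factor in $\{x^{\pm1},y^{\pm1},z^{\pm1}\}$. Consequently none of this machinery can manufacture a triple whose centre is the genuinely new element $yz$ with distinct outer factors drawn from $\{x,y\}$: the prototype is $A(x,yz,y)$, and for the same reason the ``end'' triple $A(x,y,yz)$, whose product must be peeled by the wrong generator, also escapes the slide calculus. These are the irreducible cases and constitute the true content of the lemma.

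For them I would abandon the slides and return to the ambient semi-universal-flexibility identities themselves, \eqref{eq:IPhflex1} and \eqref{eq:IPhflex2} (equivalently the left and right SUF laws of Lemma~\ref{lem:equiv1}), choosing substitutions in which $x$, $y$, $z$ and the compound $yz$ all occur, and then using the full strength of $\SA(x,y,z)$ to collapse the resulting four-fold products on each side to the single desired equation; flexibility and \LALT\ are the natural regrouping tools along the way, via $yz\cdot y = y\cdot zy$ and $y\cdot yz = y^2 z$. The hard part, and the reason this is the delicate step, is precisely the threading of the product $yz$ through the central position: identifying which instance of a SUF identity yields a centre factor $yz$ while leaving $x$ and $y$ in the outer slots is not forced by the slide calculus and must be found by direct computation (or by machine search).
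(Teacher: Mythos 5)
Your proposal correctly assembles the easy ingredients (IP by hypothesis, flexibility from $v=1$ in \eqref{eq:hflex1}, alternativity from Lemma~\ref{lem:sufip-alt}), and your structural observation --- that the equivalences \eqref{eqn:abc}, the IP symmetry $A(u,v,w)\leftrightarrow A(w\inv,v\inv,u\inv)$, and Theorem~\ref{thm:xyx} all preserve the middle factor up to inversion, hence can never manufacture a triple with the genuinely new element $yz$ in the middle slot --- is a real insight into why the lemma is nontrivial. But the proposal stops exactly where the proof has to begin. For the cases you call irreducible you say you would ``return to the ambient semi-universal-flexibility identities, choosing substitutions in which $x$, $y$, $z$ and the compound $yz$ all occur,'' and that the right substitutions ``must be found by direct computation (or by machine search).'' That is a statement of intent, not an argument: no substitution is exhibited and no equation is verified. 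The entire content of the paper's proof is precisely this missing part: a bootstrap of twelve associativity equations proved in a carefully chosen order --- first $ba\cdot bc = b\cdot abc$ by applying $L(b)R(c\inv)R(bc)=R(c\inv)R(bc)L(b)$ (an instance of \eqref{eq:IPhflex-t2}, since $bc\cdot c\inv = b$) to the element $ac$, then $b\inv a\cdot bc = b\inv\cdot abc$, then $bca\cdot b = bc\cdot ab$, and so on --- where each step applies a specific operator instance of \eqref{eq:IPhflex-t1} or \eqref{eq:IPhflex-t2} to a specific element and collapses the result using $\SA(a,b,c)$ together with the universally quantified forms of the equations already established. Without this chain, or some substitute for it, the lemma is unproved; your plan for the hard cases is in fact a description of the paper's method, not an alternative to it.

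There is also an error in your case-sorting, which makes the unproved portion even larger than you acknowledge. You claim that any triple ``in which a factor of $yz$ can be cancelled against a neighbouring $y^{\pm 1}$'' is disposed of by the slide calculus, but adjacency is not enough. Consider $A(x,y\inv,yz)$ (in the paper's notation, $A(a,b\inv,bc)$): the factor $y\inv$ is adjacent to $yz$, and indeed one grouping collapses by LIP, $x\cdot(y\inv\cdot yz)=xz$; the equation, however, demands that the other grouping $(xy\inv)\cdot(yz)$ also equal $xz$, and that is not cancellation. The slides of \eqref{eqn:abc} keep the middle factor fixed and only compose one outer factor with the \emph{other outer} factor, never with the middle one, so they cannot cancel the $y$ inside $yz$ against the middle $y\inv$. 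This triple is in fact one of the longest computations in the paper's verification. Only the genuinely cancellative configurations, such as $A(y\inv,yz,x)$, where LIP together with the hypothesis $A(y,z,x)$ evaluates both groupings outright, are immediate. So the set of irreducible cases is substantially larger than the two prototypes you name, and supplying the explicit computations for all of them is unavoidable.
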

\begin{proof}
Assume $\SA(a,b,c)$.  We need to show $\SA(a,b,bc)$.
We can write out $\SA(a,b,bc)$ explicitly as 48 equations, but,
as remarked above, the IP reduces this to 24, since we can just
write the ones for which $bc$ is not inverted.  But also, the
hypotheses are symmetric in $a / a\inv$ (since
$\SA(a,b,c) \leftrightarrow \SA(a\inv, b,c)$), so in verifying
$\SA(a,b,bc)$, we we need only write the equations in
which $a$ is not inverted.  Thus, we need to verify the following twelve:

\setlength{\columnsep}{-40pt}
\begin{multicols}{2}
\begin{enumerate}
\renewcommand{\labelenumi}{\theenumi.}  
\item\label{sa:1}
$A(a, b, bc): $
\ $ ab \cdot  bc = a \cdot  b bc.       $
\item\label{sa:5}
$A(b, a, bc): $
\ $ ba \cdot  bc = b \cdot  a bc.       $
\item\label{sa:2}
$A(a, bc, b): $
\ $ a bc \cdot b = a \cdot  bcb.        $
\item\label{sa:3}
$A(b, bc, a): $
\ $ b bc \cdot a = b \cdot  bca.        $
\item\label{sa:6}
$A(bc, a, b): $
\ $ bca \cdot b =  bc \cdot  ab.        $
\item\label{sa:4}
$A(bc, b, a): $
\ $ bcb \cdot a =  bc \cdot  ba.        $
\item\label{sa:12}
$A(a, b\inv, bc): $
\ $ ab\inv  \cdot  bc = a \cdot  b\inv  bc. $
\item\label{sa:8}
$A(b\inv, a, bc): $
\ $ b\inv a \cdot  bc = b\inv  \cdot  a bc. $
\item\label{sa:10}
$A(a, bc, b\inv): $
\ $ a bc \cdot b\inv  = a \cdot  bcb\inv .  $
\item\label{sa:7}
$A(b\inv, bc, a): $
\ $ b\inv  b c \cdot a = b\inv  \cdot  bca.  $
\item\label{sa:9}
$A(bc, a, b\inv): $
\ $ bca \cdot b\inv  =  bc \cdot  ab\inv .  $
\item\label{sa:11}
$A(bc, b\inv, a): $
\ $ bcb\inv  \cdot a =  bc \cdot  b\inv a.  $
\end{enumerate}
\end{multicols}
Now, \pref{sa:7} is immediate from the IP.
To prove \pref{sa:5}, apply $L(b)R(c\inv) R(bc) = R(c\inv) R(bc)L(b)$ to $ac$.
To prove \pref{sa:8},
apply $R(bc) = R(c)L(b)R(c\inv)R(bc)L(b\inv)$ to $b\inv a$.

Observe that once we have proved an equation, symmetry of the
hypotheses yields a number of other associations.
For example, given \pref{sa:8},  $A(b\inv, a, bc)$,
we also have $A(c\inv, a, cb)$ (interchanging $b,c$), and then
mirror symmetry yields $A(bc, a, c\inv)$.
More formally, the justification for this symmetry is that \pref{sa:8} really
establishes $\forall x,y,z \,[ \SA(x,y,z) \to A(y\inv, x, yz)]$.
Thus, we get $\SA(a,b,c) \leftrightarrow \SA(a,c,b) \to A(c\inv, a, cb)$,
and we likewise get $\SA(a,b,c) \leftrightarrow \SA(a\inv,c\inv,b\inv)
\to  A(c, a\inv , c\inv b\inv ) \leftrightarrow A(bc, a, c\inv) $.

To prove \pref{sa:6}, apply $L(bc)R(c)R(b) = R(c)R(b)L(bc)$ to $a c\inv$.
The right side is $bc \cdot ab$.
The left side is $((bc \cdot a c\inv)\cdot c ) \cdot b = bca \cdot b$
by $A(bc, a, c\inv)$.

We next prove \pref{sa:4}.  First note that
\[
b c a\inv \cdot b a =
b \cdot (c a\inv \cdot ba) =
b \cdot (c a\inv b \cdot a) =
(b c \cdot a\inv b) \cdot a =
(bc a\inv \cdot b) \cdot a\,.
\]
For the first `=', apply $[L(b), R(a\inv) R(ba)] = I$ to $c$.
For the second `=', use $A(c a\inv, b, a)$, which follows from
\pref{sa:8} by symmetry.
For the third `=', apply $[L(b), R(a\inv b) R(a)] = I$ to $c$.
For the fourth `=', use $A(bc, a\inv, b)$, which follows from
\pref{sa:6}.  Now apply $L(bc) = L(bc a\inv) R(a\inv) L(a c\inv b\inv) L(bc) R(a)$
to $ba$.
The left side is $bc \cdot ba$.  Since
$(ba) L(bc a\inv) = (bc a\inv \cdot b) \cdot a$,
the right side is $bcb \cdot a$.

To prove \pref{sa:2}, note that $abc \cdot b = ab\cdot cb = a \cdot bcb$.
We are using $A(ab,c,b)$ and $A(a,b,cb)$, which follow, by symmetry
from \pref{sa:5} and \pref{sa:4}, respectively.

To prove \pref{sa:3}:  Note that $b \cdot a\inv cb = b a\inv  \cdot cb$ (i.e.,
$A(b, a\inv, cb )$) follows from \pref{sa:6} and symmetry.
Thus, applying $L(b) R(b\inv c\inv) R(bcb) =  R(b\inv c\inv) R(bcb) L(b)$
to $a\inv cb$, we get
$ (b a\inv) \cdot (bcb) = b \cdot (a\inv \cdot bcb) $, and hence,
\[
(a b\inv) \cdot ( b \cdot (a\inv \cdot bcb))  = bcb\,.
\]
Note that $a\inv \cdot bcb = a\inv b c \cdot b$ (i.e., $A(a\inv, bc, b)$)
by \pref{sa:2} and symmetry.
Apply $L(a b\inv) R(b\inv)R(a) = R(b\inv)R(a) L(a b\inv)$ to
$b \cdot (a\inv \cdot bcb) = b \cdot (a\inv bc) \cdot b$.
The left side is $(bcb) R(b\inv)R(a) = bca$.
The right side is
$
(a b\inv) \cdot ( ( b \cdot (a\inv bc) ) \cdot a) =
(a b\inv) \cdot ((b a\inv b \cdot c) \cdot a)
$
since $A(b, a\inv b, c)$ holds by \pref{sa:2} and symmetry.
So, we get:
\[
(a b\inv) \cdot ((b a\inv b \cdot c) \cdot a) = bca\,.
\]
Finally, observe that $b a\inv b \cdot c = ba\inv \cdot bc$
(i.e., $A(b a\inv, b,c)$)
by \pref{sa:4} and symmetry.
Apply $R(a) L(a b\inv)L(b) = L(a b\inv)L(b) R(a)$ to $b a\inv b \cdot c$.
The right side is $bbc \cdot a$.
Since $(b a\inv b \cdot c) R(a) L(a b\inv) = bca$, the left side
is $b \cdot bca$.

We next verify \pref{sa:11}:  First observe that
\[
a \cdot (b\inv \cdot c\inv b\inv a) =
a \cdot b\inv c\inv b\inv \cdot a =
(a b\inv c\inv \cdot b\inv) \cdot a =
a b\inv \cdot c\inv b\inv a\,.
\]
The first two `='s use flexibility plus
$A(b\inv, c\inv b\inv, a)$ and $A(a, b\inv c\inv, b\inv)$,
which are symmetric variants of \pref{sa:2}.
The third applies $[L(a b\inv), R(b\inv)R(a)] = I$ to $c\inv$.
Next we get
\[
[ bc \cdot (b\inv \cdot c\inv b \inv a) ] \cdot (a\inv b c) = b c b\inv
\]
by applying $L(a)R(a\inv b c) L(a\inv) L(bc) = L(bc) R(a\inv bc) $ to
$b\inv \cdot c\inv b\inv a$.
The left side is
$ ( a b\inv \cdot c\inv b\inv a) R(a\inv b c) L(a\inv) L(bc) = b c b\inv $.
Finally,
\begin{multline*}
 bc \cdot  b\inv a = (b\inv )  R(a) L(bc) =
(b\inv) R(c\inv b\inv a) L(bc) R(a\inv b c) R(a) =  \\
(b c b\inv) R(a) = (b c b\inv) \cdot a\,.
\end{multline*}

To prove \pref{sa:10},
note that $a b c \cdot b\inv = ab \cdot cb\inv = a \cdot b c b\inv$,
using $A(ab, c, b\inv)$ and $A(a, b,c b\inv)$,
which follow, by symmetry from \pref{sa:8} and \pref{sa:11}, respectively.

To prove \pref{sa:9}, apply $L(b)R(c)L(b\inv)L(bc) = L(bc) R(c)$
to $a b\inv c\inv$.  Using the fact that $A(b, ab\inv, c)$ (from \pref{sa:10}
and symmetry), the left side is $bc \cdot  ab\inv$.
The right side is $ (bc \cdot a b\inv c\inv) \cdot c = bca \cdot b\inv$, since
\[
bc \cdot a b\inv c\inv  =
a R(b\inv)R(c\inv) L(bc)  = a L(bc) R(b\inv)R(c\inv)   = (bca \cdot b\inv) \cdot c\inv\,.
\]

To prove \pref{sa:12}, we need to show $ ab\inv  \cdot  bc = a c$.
We begin by showing
\[
c\inv b \cdot c a b\inv = (c\inv \cdot  bca) \cdot b\inv =
c\inv \cdot (b \cdot c a b\inv)\,.
\]
For the first `=',
apply $L(c\inv)R(b\inv)L(c)L(c\inv b) = L(c\inv b) R(b\inv)$ to $ca$.
The right side is $(c\inv b \cdot ca) \cdot b\inv = (c\inv \cdot  bca) \cdot b\inv$,
using $A(c\inv, b, ca)$, which follows from  \pref{sa:8} and symmetry.
The left side is $c\inv b \cdot cab\inv$.
For the second `=',
apply $L(b c) L(c\inv) = R(b)L(bc) L(c\inv)R(b\inv)$ to $a b\inv$.
The left side is
$c\inv  \cdot (bc \cdot a b\inv) = c\inv \cdot (b \cdot c a b\inv) $,
using $A(b,c,ab\inv)$, which follows from  \pref{sa:9} and symmetry.
The right side is $(c\inv \cdot  bca) \cdot b\inv$.
We shall now get
\[
( ab\inv  \cdot  bc) \cdot a b\inv =
a b\inv \cdot (b \cdot c a b\inv) = a \cdot c  a b\inv = ac \cdot a b\inv\,,
\]
and \pref{sa:12} will follow by cancelling the $a b\inv$.
To prove the second `=', apply
$R(c a b\inv) L(c)L(ab\inv) = L(c)L(ab\inv) R(c a b\inv)$ to $c\inv b$.
The right side is $ a \cdot c  a b\inv$.
Since $(c\inv b) R(c a b\inv) = c\inv \cdot (b \cdot c a b\inv) $,
the left side is $a b\inv \cdot (b \cdot c a b\inv)$.
The first `=' follows by FLEX and $A(b,c,a b\inv)$, which follows
from \pref{sa:9} and symmetry.  The third `=' follows
from $A(a,c,ab\inv)$, which follows from \pref{sa:5} and symmetry.

To verify \pref{sa:1}, start with
\[
a  c\inv a \cdot b\inv  =  a \cdot c\inv a b\inv =
(a c\inv b\inv  \cdot ba) \cdot b\inv\,.
\]
The first `=' is by $A(a, c\inv a, b\inv)$ (using \pref{sa:2} and symmetry).
For the second `=',
apply $R(ba)R(b\inv) L(a) = L(a) R(ba)R(b\inv)$ to $c\inv b\inv$,
using $c\inv b\inv \cdot ba = c\inv a$ (by \pref{sa:12} and symmetry)
to evaluate the left side.
Now cancel the $b\inv$ and divide by $a c\inv b\inv$ to get
$b c a\inv \cdot a  c\inv a  = ba$, so that $(ac\inv) R(a)L(bca\inv)  = ba$.
Then, applying
$R(bc)L(bc a\inv) = R(a)L(bca\inv)R(a\inv) R(bc)$ to $a c\inv$ yields:
\[
bca\inv \cdot (ac\inv \cdot bc) = bbc\,.
\]
Now, apply $L(bca\inv)L(a)R(bc) = R(bc)L(bca\inv)L(a)$ to $a c\inv$,
to get
\[
ab \cdot bc  =
a \cdot (b c a\inv \cdot  ( a c\inv \cdot bc)) = a \cdot bbc\,.
\]
\end{proof}

Theorem \ref{thm:main2} now follows:

\begin{corollary}
Every SUF IP loop is diassociative.
\end{corollary}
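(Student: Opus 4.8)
The plan is to derive the corollary as an immediate composition of the two lemmas that precede it, so that no fresh computation is needed here. First I would verify that every SUF IP loop satisfies the three standing hypotheses in the definition of a stepping-up loop: it is flexible because the identity isotope ($a=b=1$) is among its isotopes, it is alternative by Lemma~\ref{lem:sufip-alt}, and it has the IP by assumption. The second of the preceding lemmas then promotes these properties to the full stepping-up implication $\SA(x,y,z)\to\SA(x,y,yz)$, and the first preceding lemma shows that any stepping-up loop is diassociative. Chaining the two yields diassociativity of every SUF IP loop, which is exactly the assertion of Theorem~\ref{thm:main2}.

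Because the statement is a direct consequence of results already in hand, there is no genuine obstacle at the level of the corollary itself; all of the difficulty has been front-loaded into the two lemmas. In the diassociativity lemma the substance is an induction establishing $H(n)$ for every $n$, where the stepping-up implication is used precisely to slide a single generator $a$ or $b$ across a long product in the inductive step, collapsing the full set of bracketings to the $n$ products $P_\ell$. In the lemma identifying SUF IP loops as stepping up, the real labor is the verification of the twelve representative equations for $\SA(a,b,bc)$: one exploits the $a/a\inv$ and mirror symmetries of $\SA$ to cut the $48$ equations down to $12$, and then chains commuting-translation identities such as $[L(b),R(a\inv)R(ba)]=I$ to transport the single factor across each product. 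Consequently the proof I would write for the corollary is a one-line invocation of both lemmas, with the understanding that the work has already been discharged upstream.
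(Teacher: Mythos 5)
Your proposal is correct and matches the paper exactly: the corollary is stated with no further proof precisely because it is the immediate chaining of the two preceding lemmas (every SUF IP loop is a stepping-up loop, and every stepping-up loop is diassociative), with flexibility, alternativity (Lemma~\ref{lem:sufip-alt}), and the IP supplying the standing hypotheses of Definition~\ref{def:step}. Your accompanying summary of where the real work lives in the two lemmas is also accurate.
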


\section{Open Problems}
\label{sec:problems}

The main open problem here is the following:

\begin{problem}\label{prb:prob1}
Does there exist an SUF IP loop which is not a Moufang loop?
\end{problem}

We have spent a lot of effort over a period of many years on both sides of this question. Using \textsc{Prover9},
we know various cases in which SUF IP loops are Moufang. For example, every commutative SUF IP loop is Moufang, as is
every nilpotent SUF IP loop of nilpotency class $2$. We are not particularly motivated to present the
proofs of these claims, because they are long ``equation-bashing'' proofs and not very enlightening.
None of the proofs we have in special cases seem to give any insight into the general question.

In the negative direction, naive \textsc{Mace4} searches up to order 20 have been unsuccessful.
As part of their PhD dissertation work \cite{RB}, one of us (RB) attacked the problem using central extensions,
the same methodology used to construct Example \ref{ex:rb}.
They checked central extensions of the Moufang loops of orders 16, 32, 64 and 81 by cyclic groups of various
orders; see \cite{RB} for details. Unfortunately, none of those cases were successful.

Despite all this, it is still our opinion that proper (i.e. nonMoufang) SUF IP loops exist.

By Theorem~\ref{thm:main1}, an affirmative answer to Problem \ref{prb:prob1} will also be an affirmative answer to the following.

\begin{problem}\label{prb:prob2}
Does there exist an SUF loop which is not a UF loop?
\end{problem}

Here we have not searched as extensively as for Problem \ref{prb:prob1}.

We have mentioned the property in loops that the inner mappings $R(x)L(x)\inv$ are all automorphisms; this property
implies flexibility.

\begin{problem}
Does there exist a loop satisfying the aforementioned property universally, but which does not satisfy the AAIP?
\end{problem}

Example \ref{ex:rb} also suggests the following problems.

\begin{problem}
Does there exist a UF loop which does not satisfy the semi-automorphic inverse property $(xyx)\inv = x\inv y\inv x\inv$?
\end{problem}

\begin{problem}
Does there exist a commutative UF loop with the AAIP which is not a middle Bol loop?
\end{problem}

\begin{ack}
We would like to thank Petr Vojt\v{e}chovsk\'{y} for providing us with GAP code for computing central extensions.

Our investigations were aided by the automated reasoning tool \textsc{Prover9} and the
finite model builder \textsc{Mace4} \cite{MC}, as well as the LOOPS package \cite{NV} for GAP \cite{GAP4}.
\end{ack}


\end{document}